\tikzstyle{vertex}=[circle,draw=black,fill=black,inner sep=0,minimum size=5pt,text=white,font=\footnotesize]
\theoremstyle{plain}
\newtheorem{theorem}{Theorem}[section]
\newtheorem{corollary}[theorem]{Corollary}
\newtheorem{claim}[theorem]{Claim}
\newtheorem{lemma}[theorem]{Lemma}
\theoremstyle{definition}
\title{\vspace{-0.8cm}Evasive sets, covering by subspaces, and point-hyperplane incidences}
\author{Benny Sudakov  \thanks{ETH Zurich, \emph{e-mail}: \textbf{\{benjamin.sudakov,istvan.tomon\}@math.ethz.ch}}\and Istv\'an Tomon\footnotemark[1] }
\date{}
\begin{document}
	\sloppy 
	\maketitle
	
	\begin{abstract}
		Given positive integers $k\leq d$ and a finite field $\mathbb{F}$, a set $S\subset\mathbb{F}^{d}$ is \emph{$(k,c)$-subspace evasive} if every $k$-dimensional affine subspace  contains at most $c$ elements of $S$.  
		By a simple averaging argument, the maximum size of a $(k,c)$-subspace evasive set is at most $c |\mathbb{F}|^{d-k}$. When $k$ and $d$ are fixed, and $c$ is sufficiently large, the matching lower bound $\Omega(|\mathbb{F}|^{d-k})$ is proved by Dvir and Lovett. We provide an alternative proof of this result using the random algebraic method. We also prove sharp upper bounds on the size of $(k,c)$-evasive sets in case $d$ is large, extending results of Ben-Aroya and Shinkar. 
		
		The existence of optimal evasive sets has several interesting consequences in combinatorial geometry. We show that the minimum number of $k$-dimensional linear hyperplanes needed to cover the grid $[n]^{d}\subset \mathbb{R}^{d}$ is $\Omega_{d}\big(n^{\frac{d(d-k)}{d-1}}\big)$, which matches the upper bound proved by Balko, Cibulka, and Valtr, and settles a problem proposed by Brass, Moser, and Pach. Furthermore, we improve the best known lower bound on the maximum number of incidences between points and hyperplanes in $\mathbb{R}^{d}$ assuming their incidence graph avoids the complete bipartite graph $K_{c,c}$ for some large constant $c=c(d)$.
	\end{abstract}
	
	\section{Introduction}
	
	Given a finite field $\mathbb{F}$, a set of points $S\subset \mathbb{F}^{d}$ is \emph{$(k,c)$-subspace evasive} if no $k$-dimensional affine subspace contains more than $c$ elements of $S$. This notion was first investigated in an influential work of Pudl\'ak and R\"odl \cite{PR04}, who observed that explicit constructions of evasive sets in $\mathbb{F}_2^{n}$ can be transformed to give explicit constructions of bipartite Ramsey graphs. In particular, they showed that a $(d/2,c)$-evasive set $S\subset \mathbb{F}_2^{d}$ can be used to construct a bipartite graph with vertex classes of size $|S|$ containing no complete or empty bipartite graph with parts of size more than $c$. Evasive sets  also have application in coding theory in the context of list-decoding, and in combinatorial geometry, where they can be used to get incidence bounds.
	
	\subsection{Evasive sets and coding theory}
	
 Error-correcting codes are used for controlling errors in data transmission over noisy or unreliable communication channels and they were extensively studied in the last 70 years in information theory, computer science and telecommunication. An \emph{$[m,r,t]$-code over the field $\mathbb{F}$} is a linear subspace $L< \mathbb{F}^{m}$ of dimension $r$ such that the Hamming distance between any two distinct elements of $L$ is at least $t$, or equivalently, $L$ contains no nonzero vector with less than $t$ nonzero coordinates. In practice, an $[m,r,t]$-code can be used to send $r$ ($\mathbb{F}$-ary) bits of data using $m$ bits, and is capable of correcting $(t-1)/2$ faulty bits. In other words, the Hamming balls of radius $\lfloor(t-1)/2\rfloor$ centered at the code words of $L$ are disjoint, which gives the celebrated Hamming bound  $m\leq O_t\Big(|\mathbb{F}|^{\frac{(m-r)}{\lfloor(t-1)/2\rfloor}-1}\Big)$ (see, e.g., \cite{vanL}).
	
	A matrix $M\in\mathbb{F}^{(m-r)\times m}$, whose kernel is $L$, is a \emph{parity-check matrix} of $L$. It is easy to show that $L$ is an $[m,r,t]$-code if and only if any $t-1$ columns of $M$ are linearly dependent. Therefore, the problem of constructing $[m,r,t]$-codes is equivalent to the construction of a set $S$ of $m$ vectors in $\mathbb{F}^{m-r}$, forming the columns of $M$, such that no $(t-2)$-dimensional subspace contains $t-1$ elements of $S$. %, i.e., $S$ is $(t-2,t-2)$-subspace evasive set. (Istvan: this is not equivalent as subspace evasiveness is about AFFINE subspaces) 
	Writing $d=m-r$ and $k=t-2$, the Hamming bound shows that if $S\subset \mathbb{F}^{d}$ such that no $k$-dimensional linear subspace contains $k+1$ elements of $S$, then \begin{equation}\label{eq:Hamming}
	    |S|\leq O_k\Big(|\mathbb{F}|^{\frac{d}{\lfloor(k+1)/2\rfloor}-1}\Big).
	\end{equation}
	
	\medskip
	
	A list-decoding problem deals with the case when we receive message with more than $(t-1)/2$ faulty bits. In this case we might not be able to uniquely determine the original message, but we can sometimes output a small list of possibilities. An error correcting code $L\subset \mathbb{F}^{m}$ is \emph{$(\rho,c)$ list-decodable} if the Hamming ball of radius $\rho m$ around every element of $L$  contains at most $c$ elements of $L$. In 2011, Guruswami \cite{G11} discovered an important  connection between evasive sets and \emph{list-decodable codes}. He showed in \cite{G11} that if $|\mathbb{F}|=d^{O(1/\varepsilon^2)}$ and there exists $S\subset \mathbb{F}^{d}$ such that $S$ is $(1/\varepsilon,c)$-subspace evasive of size $|S|\geq |\mathbb{F}|^{d(1-\epsilon)}$, then it is possible to construct a code $L\subset \mathbb{F}^{m}$ of size $|\mathbb{F}|^{\delta m}$ which is $(1-\delta-2\varepsilon,c)$ list-decodable.
	
	Furthermore, Guruswami \cite{G11} observed that a random set $S$ of size $|\mathbb{F}|^{d-k-\delta}$ is $\big(k,O(kd/\delta)\big)$-subspace evasive with high probability, and so such a set can be used to construct list-decodable codes of near optimal capacity. In this setting, one thinks of $k$ being fixed, while $d$ and (possibly $|\mathbb{F}|$) are large. Taking $\delta=\varepsilon d$ in the above result implies that a random set of $|\mathbb{F}|^{d(1-\varepsilon)}$ points is $(k,c)$-subspace evasive with $c=O(k/\varepsilon)$. Notably, $c$ does not depend on $|\mathbb{F}|$ or $d$. This simple probabilistic argument vastly outperforms every known explicit construction, so the main focus here is to find deterministic $(k,c)$-subspace evasive sets $S$ of size $|\mathbb{F}|^{d(1-\varepsilon)}$ with $c$ small as possible, see e.g. \cite{BS14, DL12}. On the other hand, Ben-Aroya and Shinkar \cite{BS14} proved that when $\varepsilon^{-1}\leq k^{O(1)}$, the bound $c=O(k/\varepsilon)$ cannot be improved, therefore the probabilistic construction is optimal. We extend this result, showing that it remains true for every $\varepsilon^{-1}=2^{O(k)}$ as well.
	
	\begin{theorem}\label{thm:lowerbound}
		Let $\mathbb{F}$ be a field, $k$ be a positive integer, and $0<\varepsilon<1/20$, then if $d$ is sufficiently large with respect to $k$, the following holds. Let $S\subset \mathbb{F}^{d}$ such that $|S|\geq |\mathbb{F}|^{d(1-\varepsilon)}$. Then $S$ is not $\big(k,\frac{k-\log_2(1/\varepsilon)}{8\varepsilon}\big)$-subspace evasive.
	\end{theorem}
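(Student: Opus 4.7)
The naive first-moment argument over all $k$-dimensional affine subspaces of $\mathbb{F}^d$ only yields one containing at least $|S|/|\mathbb{F}|^{d-k}=|\mathbb{F}|^{k-\varepsilon d}$ points of $S$, which is vacuous as soon as $d \gg k/\varepsilon$. Since the claimed bound $c$ does not depend on $d$, a single averaging step cannot suffice: the argument must be iterative and must extract a density-versus-structure dichotomy.

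My plan is a greedy hyperplane-restriction with a heavy/light step split. Set $S_0:=S$ and $A_0:=\mathbb{F}^d$; at step $i$, pick a direction $v_i$ in $A_i$, partition $A_i$ into its $|\mathbb{F}|$ parallel affine hyperplanes along $v_i$, let $A_{i+1}$ be the heaviest, and set $S_{i+1}:=S\cap A_{i+1}$. By pigeonhole the density $\delta_i := |S_i|\cdot|\mathbb{F}|^{i-d}$ is non-decreasing. Call step $i$ \emph{heavy} if $\delta_{i+1}\geq 2\delta_i$ and \emph{light} otherwise. Since $\delta_i \leq 1$ at all times and $\delta_0 \geq |\mathbb{F}|^{-\varepsilon d}$, at most $\varepsilon d\log_2|\mathbb{F}|$ heavy steps can ever occur. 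In a maximal run of light steps, no direction provides even a factor-$2$ boost on parallel hyperplanes, which should force $S_i$ to be quantitatively equidistributed across parallel affine hyperplanes in every direction of $A_i$; a Fourier or second-moment argument then propagates this to a pseudorandomness estimate on every $k$-dimensional affine subspace $V\subseteq A_i$, giving $|S_i\cap V|\approx \delta_i|\mathbb{F}|^k$. Stopping the iteration at the step $i^\ast$ that optimally balances dimension reduction against density amplification produces a $k$-dimensional affine subspace with more than $c$ points of $S$.

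The hard part is pinning down the threshold $c=(k-\log_2(1/\varepsilon))/(8\varepsilon)$ through this bookkeeping: the $\log_2(1/\varepsilon)$ term reflects the maximum number of heavy steps that still carry information, the factor $1/\varepsilon$ translates the remaining dimension budget into a point count, and the constant $8$ absorbs loose factors from the equidistribution estimate. The main technical obstacle will be the light-step half of the dichotomy: showing rigorously, and uniformly in both $|\mathbb{F}|$ and $d$, that absence of density doubling in every direction really forces every $k$-dimensional affine subspace inside $A_i$ to be rich. Since for a generic "pseudorandom" set of density $\delta_i=|\mathbb{F}|^{-\varepsilon d}$ the expected number of points in a $k$-dimensional subspace is $|\mathbb{F}|^{k-\varepsilon d}\ll 1$, the argument cannot rely purely on expectation; it must instead use some anti-concentration or explicit probabilistic existence over random $k$-dimensional affine subspaces, and combining this with the density gain from the heavy steps is where the factor-of-$\log_2(1/\varepsilon)$ saving arises.
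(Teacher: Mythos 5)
The proposal takes a fundamentally different route from the paper, and it has a genuine gap that you yourself identify at the end but do not resolve. Your density-increment scheme can only provide a useful conclusion when heavy steps occur, but a pseudorandom set of density $\delta_0=|\mathbb{F}|^{-\varepsilon d}$ produces essentially no heavy steps, and you have at most $d-k$ dimension-reduction steps available; the resulting estimate $|S_i\cap V|\approx \delta_i|\mathbb{F}|^k$ remains far below $1$ whenever $\delta_i<|\mathbb{F}|^{-k}$, which is precisely the regime of interest once $d\gg k/\varepsilon$. You flag this and suggest ``anti-concentration or explicit probabilistic existence over random $k$-dimensional affine subspaces'', but this is exactly the hard part: a generic set of this density really does have $o(1)$ expected intersection with a random $k$-flat, so no second-moment or Fourier pseudorandomness argument can by itself produce many collinear-type coincidences. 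In other words, the light/pseudorandom branch of your dichotomy is not merely technically unfinished --- it faces a structural obstruction.

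What the paper does instead, and what your outline misses, is a combinatorial box argument (a K\H{o}v\'ari--S\'os--Tur\'an / Erd\H{o}s Box theorem for $r$-partite $r$-uniform hypergraphs with uneven part sizes). Split the coordinates of $\mathbb{F}^d$ into $r\approx\log_2(1/\varepsilon)$ blocks $V_1,\dots,V_r$ of carefully chosen sizes and view $S$ as an $r$-uniform hypergraph on $V_1\times\cdots\times V_r$. If $|S|\geq|\mathbb{F}|^{d(1-\varepsilon)}$, the hypergraph is dense enough to contain a complete box $S_1\times\cdots\times S_r\subset S$ with $|S_1|=\cdots=|S_{r-1}|=2$ and $|S_r|=k-r+2$. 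The crucial geometric observation is that such a box has $2^{r-1}(k-r+2)$ points but is contained in an affine subspace of dimension $(r-1)+(k-r+1)=k$: a product of $\{u_i,v_i\}$ sets in disjoint coordinate blocks spans only one dimension per factor. This ``box gives many points in few dimensions'' phenomenon is the mechanism producing a bound on $c$ that is independent of $d$ and $|\mathbb{F}|$, and it is exactly what your density-increment approach cannot replicate, because it never produces a structured sub-configuration --- only a denser piece. The bookkeeping then yields the threshold $c\approx 2^{r-1}k\approx k/(8\varepsilon)$, matching the statement.

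So the short verdict: your high-level intuition (``averaging alone cannot work; we need an iterated, structure-extracting argument'') is sound, but the specific mechanism you propose does not close the pseudorandom case, and the missing idea is the Box-theorem extraction of a product-structured sub-configuration that lies in a $k$-flat for free.
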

	
	Theorem \ref{thm:lowerbound} shows that if $\varepsilon^{-1}=2^{k-1}$, then a set of size more than $|\mathbb{F}|^{d(1-\epsilon)}$ is not $(k,\Omega(2^{k}))$-evasive, while the result becomes meaningless if $\varepsilon^{-1}\geq 2^{k}$. Observe that the later is a natural barrier, as in case $\mathbb{F}=\mathbb{F}_2$, a $k$-dimensional affine subspace cannot contain more than $2^{k}$ points. Therefore, if one wants to extend Theorem \ref{thm:lowerbound} beyond $\varepsilon^{-1}\geq 2^{k}$, the field $\mathbb{F}$ also has to play some role. This setting, already for $k=1$, seems extremely difficult. Bounding the size of a set in $\mathbb{F}_3^d$ containing no three points on a line is equivalent with the famous \emph{Cap set problem}, for which the upper bound $2.756^d$ was recently proved by Ellenberg and Gijswijt \cite{EG17}, following the breakthrough of Croot, Lev, and Pach \cite{CLP}. However, similar results are already not known for four points on a line. 
	
	It appears, that the upper bound on evasiveness behaves very differently in the regime when $c$ is close to $k$. In this case, we show that the Hamming bound mentioned above can be used to estimate the size of  $(k,k+C)$-subspace evasive sets, where $C< k/2$. Interestingly, the method of proof for this 
	range of parameters is fundamentally different from that in Theorem \ref{thm:lowerbound}. While the proof of this theorem is mostly combinatorial, relying on a generalization of the Erd\H{o}s Box theorem \cite{E64}, the proof of the following result (such as the proof of the Hamming bound) is based on coding theory.
	
	\begin{theorem}\label{thm:Hammingbound}
		Let $S\subset \mathbb{F}^{d}$ be $(k,k+C)$-subspace evasive, where $C\leq \frac{k}{2}-1$. Then $$|S|\leq 4k|\mathbb{F}|^{\frac{d}{\lfloor k/2(C+1)\rfloor}}.$$
	\end{theorem}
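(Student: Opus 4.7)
The plan is to apply a classical sphere-packing (Hamming) bound to a carefully \emph{shortened} version of the parity-check code naturally associated to $S$.

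First, reduce the affine evasiveness to a linear one. Translating so that $0\in S$ and deleting the origin, the set $S''=S\setminus\{0\}$ has the property that every $k$-dimensional linear subspace of $\mathbb{F}^d$ contains at most $k+C-1$ points of $S''$; equivalently, any $k+C$ elements of $S''$ span a linear subspace of dimension at least $k+1$. Writing $n=|S''|$ and letting $M\in\mathbb{F}^{d\times n}$ be the matrix whose columns are the elements of $S''$, consider the code $L=\ker M\subseteq\mathbb{F}^n$; for any $I\subseteq[n]$, set $L_I=\{y\in L:\operatorname{supp}(y)\subseteq I\}$. The evasiveness then translates to $\dim L_I\leq |I|-(k+1)$ whenever $|I|\geq k+C$.

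Let $e=\lfloor k/(2(C+1))\rfloor$ and let $L_{\leq 2e}\subseteq L$ be the subspace spanned by codewords of Hamming weight at most $2e$. I claim $\dim L_{\leq 2e}\leq C-1$: otherwise, $C$ linearly independent such codewords would have combined support $I_0$ of size at most $2eC\leq k$ (since $2e(C+1)\leq k$), which is strictly less than $k+C$, so padding $I_0$ arbitrarily to a set $I$ of size exactly $k+C$ would give $\dim L_I\geq C$, contradicting $\dim L_I\leq C-1$. Now fix $I_0\subseteq[n]$ to be the combined support of a basis of $L_{\leq 2e}$, so that $|I_0|\leq 2e(C-1)$ and $L_{\leq 2e}\subseteq\mathbb{F}^{I_0}$, and \emph{shorten} $L$ at $I_0$:
\[
L^{*}=\{\,y|_{[n]\setminus I_0}\ :\ y\in L,\ y|_{I_0}=0\,\}\subseteq\mathbb{F}^{n-|I_0|}.
\]
Then $\dim L^{*}\geq n-d-|I_0|$, and $L^{*}$ has minimum distance at least $2e+1$: any nonzero codeword of $L^{*}$ lifts to some $y\in L$ with $\operatorname{supp}(y)\cap I_0=\emptyset$, so if the weight of $y$ is at most $2e$ then $y\in L_{\leq 2e}\subseteq\mathbb{F}^{I_0}$, forcing $y=0$.

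Applying the classical Hamming bound to $L^{*}$ (with $q=|\mathbb{F}|$) yields $\binom{n-|I_0|}{e}(q-1)^e\leq q^{d}$, hence $n-|I_0|\leq e\cdot q^{d/e}$. Combining this with $|I_0|\leq 2e(C-1)\leq k-2e$ and $e\leq k/2$ gives $n\leq k+(k/2)q^{d/e}\leq 4kq^{d/e}$, which is the claimed bound (after accounting for the translation). The main technical challenge is the shortening step: a direct list-decoding variant of the Hamming bound only produces $n\leq O(e\cdot q^{(d+C-1)/e})$, which is off by a factor of order $q^{(C-1)/e}$; the shortening trick localizes all low-weight codewords of $L$ onto the small coordinate set $I_0$ and then removes those coordinates, so the residual code has a genuinely large minimum distance and the classical (rather than list-decoding) Hamming bound applies.
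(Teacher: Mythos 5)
Your proof is correct, and it takes a genuinely different route from the paper's. The paper splits $k=k_1+\dots+k_{C+1}$ into $C+1$ roughly equal parts and repeatedly invokes the \emph{contrapositive} of the basic sphere-packing bound: since $\lfloor(k_i+1)/2\rfloor>\lfloor k/(2(C+1))\rfloor$, a set exceeding the claimed size must contain $k_i+1$ points in a $k_i$-dimensional linear subspace; one greedily extracts $C+1$ disjoint such configurations and observes their union is a set of $k+C+1$ points lying in a $k$-dimensional (linear, hence affine) subspace, a contradiction. Your argument instead reasons entirely on the dual-code side: you observe that the kernel $L=\ker M$ cannot have $C$ independent codewords of weight $\le 2e$ (or else padding their joint support to a set $I$ of size $k+C$ would violate $\dim L_I\le C-1$), you shorten $L$ at the small set $I_0$ carrying all low-weight codewords, and you apply the \emph{classical} Hamming bound to the shortened code $L^*$, which now has genuine minimum distance $\ge 2e+1$. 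Both proofs ultimately rest on sphere-packing, but yours localizes and removes the obstruction inside the code rather than iterating a weaker bound on the primal side; the paper's is shorter and more elementary, while yours makes explicit the coding-theoretic structure (and explains, in the final paragraph, exactly why a naive list-decoding Hamming bound loses a factor of $q^{(C-1)/e}$ that the shortening recovers). Two small points worth stating explicitly in a final write-up: the claim $\dim L_{\le 2e}\le C-1$ presupposes $C\ge 1$ and $n\ge k+C$, and the Hamming-bound step presupposes $n-|I_0|\ge e$ and $n-d-|I_0|\ge 0$; in each degenerate case $|S|$ is bounded by roughly $d+k$ or $k+C$, which is already below $4k|\mathbb{F}|^{d/e}$, so the theorem holds trivially.
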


\noindent	
Using the standard probabilistic argument, one can easily show that the bound in this theorem is optimal up to a factor of 2 in the exponent.
	
	\subsection{Evasive sets over large fields}
	
	Motivated by applications in combinatorial geometry, another interesting setting is to consider large $(k,c)$-subspace evasive sets in $\mathbb{F}^{d}$, where we think of $k$ and $d$ as fixed, while $|\mathbb{F}|$ is arbitrarily large. Clearly, a simple averaging argument shows that a $(k,c)$-subspace evasive set in $\mathbb{F}^{d}$ can have size at most $c|\mathbb{F}|^{d-k}$. As mentioned above, the probabilistic argument shows that a random set of $|\mathbb{F}|^{d-k-\delta}$ points is $(k,c)$-subspace evasive for $\delta=\Theta(kd/c)$. Note that, however, a random set of $\Omega_{d}(|\mathbb{F}|^{d-k})$ points does intersect many $k$-dimensional affine subspaces in  $\Omega_{d}(\log |\mathbb{F}|)$ elements with high probability. Dvir and Lovett \cite{DL12} (see Theorem 2.4 together with Claim 3.5) showed that this can be improved, by giving an explicit algebraic construction of a $(k,c)$-subspace evasive set of size $\Omega(|\mathbb{F}|^{d-k})$, where $c$ depends only on $d$ and~$k$.
	
	\begin{theorem}(\cite{DL12})\label{thm:main}
		For every pair of positive integers $k,d$ satisfying $k\leq d$, there exists a positive integer $c=c(d,k)$ such that the following holds. For every finite field $\mathbb{F}$, there exists a $(k,c)$-subspace evasive set of size $|\mathbb{F}|^{d-k}/3$ in $\mathbb{F}^{d}$.
	\end{theorem}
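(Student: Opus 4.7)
The plan is to use the random algebraic method. Set $q=|\mathbb{F}|$, fix a large enough integer $D=D(d,k)$ to be chosen below, and let $\mathcal{P}\subset\mathbb{F}[x_1,\dots,x_d]$ be the $\binom{d+D}{d}$-dimensional space of polynomials of degree at most $D$. Choose $f_1,\dots,f_k\in\mathcal{P}$ independently and uniformly at random (each coefficient drawn independently and uniformly from $\mathbb{F}$), and set $V=\{x\in\mathbb{F}^d:f_1(x)=\cdots=f_k(x)=0\}$. I will show that with positive probability $V$ simultaneously satisfies $|V|\geq q^{d-k}/3$ and $|V\cap L|\leq D^k$ for every $k$-dimensional affine subspace $L\subset\mathbb{F}^d$, which yields the theorem with $c(d,k)=D^k$.

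For the size lower bound, note that for any fixed $x\in\mathbb{F}^d$ the evaluation $f\mapsto f(x)$ is a surjective linear form on $\mathcal{P}$, so $f_i(x)$ is uniform in $\mathbb{F}$ and $\Pr[x\in V]=q^{-k}$; hence $\mathbb{E}|V|=q^{d-k}$. The analogous two-point calculation gives pairwise independence of the indicators, so Chebyshev's inequality yields $|V|\geq q^{d-k}/2$ with probability at least $3/4$ once $q^{d-k}$ exceeds an absolute constant (the finitely many remaining small-field cases can be handled by a direct construction). For the evasiveness, fix a $k$-dimensional affine subspace $L$ and parameterize it by an affine isomorphism $\phi:\mathbb{F}^k\to L$; since the restriction map $\mathcal{P}\to\mathcal{P}_k$ (polynomials of degree $\leq D$ in $k$ variables) is surjective, the restrictions $g_i=f_i\circ\phi$ are independent and uniform in $\mathcal{P}_k$. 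Provided the $\bar{\mathbb{F}}$-variety $V(g_1,\dots,g_k)\subset\bar{\mathbb{F}}^k$ is zero-dimensional, B\'ezout's theorem gives $|V\cap L|\leq D^k$. I would secure zero-dimensionality inductively: writing $V_j=V(g_1,\dots,g_j)$ and assuming $V_{j-1}$ is pure of dimension $k-j+1\geq 1$, B\'ezout bounds the number of its irreducible components by $D^{j-1}$, and the event that $g_j$ vanishes identically on any of them has probability at most $D^{j-1}q^{-\Omega(D)}$. Union bounding over $j\leq k$ and over the at most $q^{(k+1)(d-k)}$ affine $k$-subspaces $L$, taking $D$ sufficiently large in terms of $d,k$ makes the total failure probability $o(1)$.

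The main technical obstacle is the per-component estimate $\Pr[g|_C\equiv 0]\leq q^{-\Omega(D)}$, where $g\in\mathcal{P}_k$ is uniformly random and $C\subset\bar{\mathbb{F}}^k$ is an irreducible positive-dimensional subvariety of degree bounded in terms of $D$ and $k$. The naive bound $q^{-1}$ (from the fact that polynomials vanishing on $C$ form a proper subspace of $\mathcal{P}_k$) is too weak to beat the $q^{(k+1)(d-k)}$ union bound, and one must exploit that the codimension of this subspace equals $h^0(C,\mathcal{O}(D))$, which by Hilbert-polynomial estimates grows polynomially in $D$ with degree $\dim C\geq 1$. Executing this cleanly requires $D$ to exceed the Castelnuovo-Mumford regularity of the relevant components (controllable in terms of their degree, itself at most $D^{k-1}$), replacing non-$\mathbb{F}$-rational components by their Galois orbits, and inductively maintaining that $V_{j-1}$ is pure of the expected dimension despite potential positive-characteristic pathologies. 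Once these are handled, the construction yields an admissible $c(d,k)=D^k$ with $D$ polynomial in $d$ and $k$.
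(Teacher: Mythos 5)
Your construction is genuinely different from the paper's. You take $S$ to be the common zero set $V(f_1,\dots,f_k)\subset\mathbb{F}^d$ of $k$ random degree-$D$ polynomials in $d$ variables; the paper instead takes the \emph{image} $S=\mathbf{q}(\mathbb{F}^{d-k})$ of a random polynomial map $\mathbf{q}:\mathbb{F}^{d-k}\to\mathbb{F}^d$ (stated after swapping $k\leftrightarrow d-k$). Both are instances of the random algebraic method, and your second-moment argument for the size lower bound is fine. The divergence, and the gap, is in how one bounds $|S\cap L|$ for a $k$-dimensional affine $L$.

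You reduce to showing that for the restrictions $g_i=f_i\circ\phi$, the variety $V(g_1,\dots,g_k)\subset\bar{\mathbb{F}}^k$ is zero-dimensional with high probability, then invoke B\'ezout. The inductive step requires the estimate $\Pr[g_j|_C\equiv 0]\leq q^{-\Omega(D)}$ for each positive-dimensional component $C$ of $V(g_1,\dots,g_{j-1})$, uniformly in $C$, and strong enough to survive a union bound over $\sim q^{(k+1)(d-k)}$ subspaces. You flag this as the main obstacle and gesture at Hilbert polynomials and Castelnuovo--Mumford regularity, noting that the degree of $C$ can be as large as $D^{k-1}$. This is precisely where the argument is incomplete: as you set it up, you would want $D$ to exceed the regularity of $C$, yet that regularity is only bounded in terms of $\deg C\leq D^{k-1}$, which is circular for $k\geq 3$. (There is a way out --- a generic linear projection $C\to\mathbb{A}^1$ already forces the degree-$\leq D$ restriction space to have dimension $\geq D+1$ without any regularity input --- but one then has to carry out the induction maintaining purity of dimension, deal with components not defined over $\mathbb{F}$ via Galois orbits, and check that B\'ezout in its refined form controls the number of components at every stage. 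None of this is executed, and the route you sketch does not straightforwardly close it.)

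The paper avoids this entirely. The key tool you are missing is Bukh's dichotomy (Lemma \ref{lemma:variety}): for any $q_1,\dots,q_k\in\mathbb{F}[x_1,\dots,x_k]$ of degree $\leq D$, the variety $\{\mathbf{x}\in\mathbb{F}^k:q_1(\mathbf{x})=\cdots=q_k(\mathbf{x})=0\}$ has size either at most some constant $c=c(k,D)$ or at least $|\mathbb{F}|-c|\mathbb{F}|^{1/2}$. This converts the evasiveness question into a tail bound: one shows $\mathbb{E}[N(L)^s]\leq s^{s+1}$ using $s$-wise independence of the point-indicators (Claims \ref{claim:uniform} and \ref{claim:independent}), then Markov gives $\Pr[N(L)\geq |\mathbb{F}|/2]\leq (2D)^{D+1}|\mathbb{F}|^{-D}$, which beats the union bound for $D=(d+1)k+1$. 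No B\'ezout, no dimension bookkeeping, no Galois orbits. It is worth noting that this lemma would also repair \emph{your} construction: applied to $(g_1,\dots,g_k)$ on $L\cong\mathbb{F}^k$, it gives the same small-or-huge dichotomy for $|S\cap L|$, after which the identical moment argument closes the proof. So the two constructions are both viable, but the dichotomy lemma, not a B\'ezout/zero-dimensionality analysis, is the engine that makes either one work cleanly.
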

	
To provide a different perspective and for the convenience of the reader, we give a short, alternative proof of this theorem. Note the striking difference between the bound of Theorem \ref{thm:main} and the lower bounds in case $c$ being small. This leads to the natural question about the dependence of $c(d,k)$ on the parameters $d$ and $k$. The proof of Dvir and Lovett \cite{DL12} gives $c(d,k)=d^{k}$ (if $|\mathbb{F}|$ is sufficiently large), which is likely to be far from optimal, while our proof gives even worse bounds. On the other hand, applying Theorem \ref{thm:lowerbound} with $\varepsilon=\max\{\frac{k}{d},\frac{1}{2^{k-1}}\}$, we get the lower bound $c(d,k)=\Omega(\min\{d,2^k\})$. This might raise the question whether $c(d,k)$ can be bounded by a function of $k$ alone. However, this is not true already for $k=1$. Indeed, if $d$ is sufficiently large with respect to $k$ and $\mathbb{F}$, then the density Hales-Jewett theorem \cite{FK91} implies that any subset $S\subset \mathbb{F}^{d}$ of size at least $\frac{1}{|\mathbb{F}|^k}|\mathbb{F}|^d$ contains a \emph{combinatorial line}, which in turn is also a complete $1$-dimensional affine subspace.

	\subsection{Covering by subspaces}
	
	Theorem \ref{thm:main} has a number of interesting applications in combinatorial geometry. The following problem first appeared in a paper of  Brass and Knauer \cite{BK03} in connection to point-hyperplane incidences, which we discuss in more detail in the next subsection. Given positive integers $n,k,d,c$ with $k\leq d$, determine the maximum number of lattice points in the grid $[n]^{d}=\{1,\dots,n\}^{d}$ with no $k$-dimensional linear or affine subspace containing more than $c$ of them (over $\mathbb{R}$). Let $\ell(d,k,n,c)$ denote this maximum in the linear case, and $a(d,k,n,c)$ in the affine case.  Here, we are interested in the behavior of $\ell(d,k,n,c)$ and $a(d,k,n,c)$ as a function of $n$, while we think of $k,d,c$ as fixed. Clearly, we have $a(d,k,n,c)\leq cn^{d-k}$ as we can cover $[n]^{d}$ by $n^{d-k}$ affine hyperplanes of dimension $k$. On the other hand, a probabilistic argument of Brass and Knauer \cite{BK03} shows that for every $\varepsilon>0$ there exists $c=c(d,k,\varepsilon)$ such that  $a(d,k,n,c)\geq n^{d-k-\varepsilon}$. The tight result $a(d,k,n,k+1)=\Omega_{d}(n^{d-k})$ was only known in the
	two special cases when $k=1$ or $k=d-1$. A straightforward application of Theorem \ref{thm:main} lets us close the gap between the lower and upper bound for every $k<d$ and sufficiently large $c$.
	
	\begin{theorem}\label{thm:affine}
		For every pair of positive integers $k,d$ satisfying $k\leq d$, there exists a positive integer $c=c(d,k)$ such that the following holds. For every positive integer $n$ there exists a set $S\subset [n]^{d}$ of size at least $(n/2)^{d-k}$ such that no $k$-dimensional affine hyperplane contains more than $c$ elements of $S$.
	\end{theorem}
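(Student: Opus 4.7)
The plan is to transport the finite-field evasive set of Theorem \ref{thm:main} to the integer grid via reduction modulo a carefully chosen prime. By Bertrand's postulate, fix a prime $p$ with $n/2 < p \leq n$, and let $T \subset \mathbb{F}_p^d$ be the $(k,c_0)$-subspace evasive set of size at least $p^{d-k}/3$ given by Theorem \ref{thm:main}, where $c_0 = c(d,k)$. Identify $\mathbb{F}_p$ with $\{1,2,\ldots,p\} \subset [n]$ (so that $0 \bmod p$ is represented by $p$); this realizes $T$ as a subset of $[n]^d$ of size $\Omega_d((n/2)^{d-k})$, the factor $1/3$ being absorbed into the implicit constant.

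The heart of the proof is to verify that this same set $T$ remains evasive for affine subspaces over $\mathbb{R}$. Let $V \subset \mathbb{R}^d$ be a $k$-dimensional affine subspace, set $M := V \cap T$, and assume $M \neq \emptyset$. Fix $v_0 \in M$, and let $W \subset \mathbb{R}^d$ denote the $\mathbb{R}$-linear span of $M - v_0$, of some dimension $r \leq k$. The key observation is that $\tilde L := W \cap \mathbb{Z}^d$ is a \emph{saturated} sublattice of $\mathbb{Z}^d$ of rank $r$: by a standard Smith-normal-form calculation, any $\mathbb{Z}$-basis $e_1,\ldots,e_r$ of $\tilde L$ satisfies $\gcd\bigl(\text{$r \times r$ minors of } [e_1\mid\cdots\mid e_r]\bigr) = 1$. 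In particular, at least one such minor is coprime to $p$, and so the reductions $\bar e_1,\ldots,\bar e_r \in \mathbb{F}_p^d$ remain $\mathbb{F}_p$-linearly independent.

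Now every $w \in M$ lies in $v_0 + \tilde L$, so one can write $w = v_0 + \sum_{i=1}^r c_i e_i$ for some integers $c_i$. Reducing modulo $p$, and using that both $v_0$ and $w$ have coordinates in $\{1,\ldots,p\}$ and thus represent themselves in $\mathbb{F}_p^d$, we find that $M$ is contained, as a subset of $\mathbb{F}_p^d$, in the $r$-dimensional affine subspace $v_0 + \mathrm{span}_{\mathbb{F}_p}(\bar e_1,\ldots,\bar e_r)$. Since $T$ is $(k,c_0)$-evasive over $\mathbb{F}_p$ and $r \leq k$, this subspace contains at most $c_0$ elements of $T$, so $|M| \leq c_0$ and we may take $c := c_0$. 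The principal obstacle is precisely the saturation step: without it, the direction lattice of $V$ could drop rank modulo $p$, producing an affine subspace of dimension strictly greater than $k$ over $\mathbb{F}_p$ and thereby voiding the hypothesis of Theorem \ref{thm:main}.
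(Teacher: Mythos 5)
Your construction matches the paper's: reduce the finite-field evasive set from Theorem~\ref{thm:main} modulo a prime $p\in(n/2,n]$ and read off the result in $[p]^d\subset[n]^d$. The verification reaches the right conclusion, but the saturation/Smith-normal-form detour is unnecessary: since every difference $w-v_0$ with $w\in M$ is an integer vector, the $\mathbb{F}_p$-rank of the reductions is automatically at most the $\mathbb{Q}$-rank $r\leq k$ (all $(r+1)\times(r+1)$ minors vanish over $\mathbb{Z}$, hence over $\mathbb{F}_p$), so $M\bmod p$ sits in an affine $\mathbb{F}_p$-subspace of dimension at most $k$ with no saturation hypothesis. Your closing remark also inverts the concern: a \emph{drop} in rank modulo $p$ only produces a \emph{lower}-dimensional subspace, which is harmless since any such subspace is contained in a $k$-dimensional one; an \emph{increase} in rank is what would cause trouble, but for integer matrices that is impossible, so there was never an obstacle to guard against.
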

	
	Indeed, let $p$ be any prime between $n/2$ and $n$, which exists by Bertrand's postulate. Let $c=c(d,k)$ be the constant guaranteed by Theorem \ref{thm:main}, and let $S_0\subset \mathbb{F}_p^{d}$ be a set of $p^{d-k}\geq (n/2)^{d-k}$ vectors such that no $k$-dimensional affine subspace contains more than $c$ elements of $S_0$. Setting $S$ to be the set of lattice points in $[p]^{d}$ that are congruent to the elements of $S_0$ modulo $p$ gives the desired set.
	
	Determining $\ell(d,k,n,c)$ seems to be more difficult. Brass and Knauer \cite{BK03} conjectured that $\ell(d,k,n,k)=\Theta_{d}(n^{d(d-k)/(d-1)})$. However, this was refuted by Lefmann \cite{L} for most values of $k$ and $d$, as he showed that $\ell(d,k,n,k)=O_d(n^{d/\lfloor k/2\rfloor})$ (akin the Hamming bound, mentioned in the previous subsection).  Similarly to the affine case, bounding $\ell(d,k,n,c)$ is closely related to the problem of bounding $g(d,k,n)$, which is the minimum number of $k$-dimensional linear hyperplanes in a covering of $[n]^{d}$. Indeed, we trivially have $\ell(d,k,n,c)\leq cg(d,k,n)$. The problem of estimating $g(d,k,n)$ was proposed by Brass, Moser, and Pach \cite{BMP} (Problem 6 in Chapter 10.2). B\'ar\'any, Harcos, Pach, and Tardos \cite{BHPT} resolved the $k=d-1$ case of both problems by showing that $\Omega_d(n^{d/(d-1)})=\ell(d,d-1,n,d-1)\leq (d-1)g(d,d-1,n)=O_d(n^{d/(d-1)})$. In general, Balko, Cibulka, and Valtr \cite{BCV} showed that $g(d,k,n)=O_{d}(n^{d(d-k)/(d-1)})$ and $g(d,k,n)>n^{d(d-k)/(d-1)-o(1)}$, where the lower bound comes from proving $\ell(d,k,n,c)\geq n^{d(d-k)/(d-1)-\varepsilon}$ for some $\varepsilon=\varepsilon_{d,k}(c)$ tending to 0 as $c$ tends to infinity. If $k=1$, it was shown by Konyagin and Sudakov \cite{KS} that the $o(1)$ and $\varepsilon$ terms can be removed, closing the gap in this case. Here, we close the gap for all values of $k$ and~$d$.
	
	\begin{theorem}\label{thm:linear}
		For every pair of positive integers $k,d$ satisfying $k\leq d$, there exist a positive integer $c=c(d,k)$ and real number $C=C(d,k)>0$ such that the following holds. For every positive integer $n$ there exists a set $S\subset [n]^{d}$ of size at least $Cn^{d(d-k)/(d-1)}$ such that no $k$-dimensional linear hyperplane contains more than $c$ elements of $S$.
	\end{theorem}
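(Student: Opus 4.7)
The plan is to construct the set $S$ by pulling back, via a projective identification, an affine-evasive set in $\mathbb{F}_p^{d-1}$ provided by Theorem \ref{thm:main}. The key geometric observation is that a $k$-dimensional linear hyperplane in $\mathbb{R}^{d}$ corresponds to a $(k-1)$-dimensional projective subspace of $\mathbb{P}^{d-1}(\mathbb{Q})$, so linear-subspace evasion in $[n]^d$ should reduce, after reduction modulo a well-chosen prime $p$, to affine evasion in $\mathbb{F}_p^{d-1}$. Concretely, I fix a prime $p=\Theta(n^{d/(d-1)})$ via Bertrand's postulate; this is tuned so that $p^{d-k}=\Theta(n^{d(d-k)/(d-1)})$ matches the target size, and $|\mathbb{P}^{d-1}(\mathbb{F}_p)|\sim p^{d-1}\sim n^{d}$ matches the number of primitive lattice directions in $[n]^d$. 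Apply Theorem \ref{thm:main} to obtain a $(k-1,c_0)$-affine-evasive set $S_0\subset\mathbb{F}_p^{d-1}$ of size at least $p^{d-k}/3$, and embed it into $\mathbb{P}^{d-1}(\mathbb{F}_p)$ via the affine chart $(y_1,\ldots,y_{d-1})\mapsto[y_1:\cdots:y_{d-1}:1]$. Let $S\subset[n]^{d}$ consist of all primitive lattice vectors $v$ with positive coordinates whose projective reduction $[v\bmod p]$ lies in the image of $S_0$. A direct computation gives $\mathbb{E}_t|S_{t}|=\Theta(n^d)\cdot p^{1-k}=\Theta(n^{d(d-k)/(d-1)})$ when $S_0$ is replaced by its translate $S_0+t$ for uniformly random $t\in\mathbb{F}_p^{d-1}$ (affine translations preserve the evasion property), so some deterministic translate achieves $|S|\geq Cn^{d(d-k)/(d-1)}$ for $C=C(d,k)>0$.

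To verify the evasion property, let $H\subset\mathbb{R}^{d}$ be any $k$-dimensional linear hyperplane. Since the lattice $\Lambda_H=H\cap\mathbb{Z}^d$ is saturated in $\mathbb{Z}^d$, its Plücker coordinates are coprime, so the defining matrix of $H$ has full rank modulo $p$ and the projective reduction $\mathbb{P}(\Lambda_H)_p$ is genuinely $(k-1)$-dimensional. Intersecting with the affine chart yields a $(k-1)$-dimensional affine subspace of $\mathbb{F}_p^{d-1}$ containing at most $c_0$ elements of $S_0$. A direct lattice count then shows that for each such element $s$, the number of primitive lifts in $\Lambda_H\cap[n]^{d}$ with $[v\bmod p]$ equal to the corresponding projective class is at most a constant times $n^{(d-k)/(d-1)}/\operatorname{covol}(\Lambda_H)$, which is $O_d(1)$ whenever $\operatorname{covol}(\Lambda_H)\gtrsim n^{(d-k)/(d-1)}$, and in that case $|S\cap H|\leq O_d(c_0)$.

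The remaining and main obstacle is handling hyperplanes of small covolume, where a single $s\in S_0$ may contribute many lifts on $H$. I plan to resolve this by removing from $S$ all points lying on any $H$ with $\operatorname{covol}(\Lambda_H)<n^{(d-k)/(d-1)}$. A dyadic decomposition by covolume, combined with the standard lattice-counting estimate (due to Schmidt) that the number of primitive rank-$k$ sublattices of $\mathbb{Z}^d$ of covolume in $[T,2T]$ is $O_{d,k}(T^{d-1})$, bounds the total number of removed points by $O_{d,k}(n^{(d-k)/(d-1)}\cdot n^{(d-k)(d-2)/(d-1)})=O_{d,k}(n^{d-k})$, which is $o(n^{d(d-k)/(d-1)})$. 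Hence a constant fraction of $S$ survives, yielding a set of size $\Omega_{d,k}(n^{d(d-k)/(d-1)})$ that is $(k,c)$-linear-evasive for some $c=c(d,k)$.
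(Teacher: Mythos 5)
Your approach is genuinely different from the paper's, and it has real gaps. The paper works directly with the linear-evasive set $S_0\subset\mathbb{F}_p^d$ from Theorem~\ref{thm:main}, and uses the Dirichlet-style Lemma~\ref{lemma:approx} to pick, for each $\mathbf{x}\in S_0$, a \emph{single} representative $\mathbf{x}^*\in[-n,n]^d$ with $\mathbf{x}^*\equiv\lambda\mathbf{x}\pmod p$. Because scalar multiples of $\mathbf{x}$ lie in the same linear subspaces mod $p$, the evasion property transfers to $S^*=\{\mathbf{x}^*\}$ with no lattice-point counting at all; the rest is a pigeonhole on sign patterns. You instead projectivize into $\mathbb{F}_p^{d-1}$ and collect \emph{all} primitive lattice lifts of the evasive set, which forces you to control the number of lifts per projective class, a problem the paper simply never encounters.

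Two steps in your plan do not hold as stated. First, your claim that the number of primitive lifts in $\Lambda_H\cap[n]^d$ with a fixed projective reduction is $O_d\bigl(n^{(d-k)/(d-1)}/\operatorname{covol}(\Lambda_H)\bigr)$ amounts to bounding $|\Lambda'\cap[n]^d|$ by $\asymp n^k/\operatorname{covol}(\Lambda')$ for the rank-$k$ sublattice $\Lambda'\subset\Lambda_H$ of index $p^{k-1}$. That volume bound is an \emph{upper} bound only for lattices with balanced successive minima; if $\Lambda'$ contains a very short vector $u$ while other minima are $\gg n$, then $|\Lambda'\cap[n]^d|$ can be as large as $\Theta(n/|u|)$ even with $\operatorname{covol}(\Lambda')\ge n^k$, and restricting to primitive lattice points does not remove these progressions $v_0+ju$. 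Your saturation observation controls $\Lambda_H$, not $\Lambda'$, and gives no lower bound on $\lambda_1(\Lambda')$. Second, the Schmidt estimate is misstated: the number of primitive rank-$k$ sublattices of $\mathbb{Z}^d$ of height (covolume) at most $T$ is $\Theta_{d,k}(T^d)$, not $O_{d,k}(T^{d-1})$, so the number with covolume in $[T,2T]$ is $\Theta_{d,k}(T^d)$. Redoing your dyadic sum with the correct exponent, the total number of removed points becomes $\Theta\bigl(n^{d(d-k)/(d-1)}\bigr)$ --- the same order as $|S|$ --- so the removal step does not give the $o\bigl(n^{d(d-k)/(d-1)}\bigr)$ savings you need. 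Both issues would require substantial additional argument (e.g.\ pruning hyperplanes by all successive minima rather than just covolume, and counting only sublattices that actually meet $[n]^d$); the paper's one-representative-per-class trick avoids them entirely.
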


	\begin{corollary}\label{cor:covering}
		Let $k,d$ be positive integers satisfying $k< d$, then there exists $C>0$ such that the following holds for every positive integer $n$. The number of $k$-dimensional hyperplanes in any covering of $[n]^{d}$ is at least $Cn^{d(d-k)/(d-1)}$.
	\end{corollary}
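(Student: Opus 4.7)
The plan is to derive Corollary \ref{cor:covering} as an immediate pigeonhole consequence of Theorem \ref{thm:linear}, exactly in the spirit of the trivial bound $\ell(d,k,n,c)\leq c\cdot g(d,k,n)$ mentioned in the surrounding discussion. Specifically, I would apply Theorem \ref{thm:linear} to obtain constants $c=c(d,k)$ and $C'=C'(d,k)>0$, and a set $S\subset [n]^{d}$ of size at least $C' n^{d(d-k)/(d-1)}$ such that every $k$-dimensional linear hyperplane in $\mathbb{R}^{d}$ contains at most $c$ points of $S$.

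Next, I would take an arbitrary covering $\mathcal{H}$ of $[n]^{d}$ by $k$-dimensional linear hyperplanes. Since every point of $S$ lies in some $H\in \mathcal{H}$, double counting pairs $(x,H)$ with $x\in S\cap H$ gives
\[
|S|\;\leq\;\sum_{H\in\mathcal{H}}|S\cap H|\;\leq\;c\cdot|\mathcal{H}|,
\]
so $|\mathcal{H}|\geq |S|/c \geq (C'/c)\, n^{d(d-k)/(d-1)}$. Setting $C=C'/c$ then yields the desired bound.

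There is essentially no obstacle here beyond invoking Theorem \ref{thm:linear}; the only thing worth noting is that this deduction only uses the linear-hyperplane hypothesis in a black-box way via the $c$-bound on intersections, so no further geometric input is required. All the genuine content has been absorbed into Theorem \ref{thm:linear}, whose proof in turn relies on Theorem \ref{thm:main} on evasive sets over finite fields.
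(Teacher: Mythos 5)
Your deduction is correct, and it is exactly the pigeonhole step the paper has in mind when it remarks just before the statement that ``we trivially have $\ell(d,k,n,c)\leq c\,g(d,k,n)$''; combined with Theorem~\ref{thm:linear}, that inequality yields the corollary. However, the proof the paper actually writes out in Section~\ref{sect:covering} is a genuinely different and much more elementary argument that bypasses Theorem~\ref{thm:linear} (and hence the entire evasive-set machinery of Theorem~\ref{thm:main}) altogether. There, one picks a prime $p$ with $n^{d/(d-1)}/2<p<n^{d/(d-1)}$ and forms the set $S\subset[-n,n]^d$ of one representative per projective equivalence class of $\mathbb{F}_p^d\setminus\{0\}$, each chosen with all coordinates in $[-n,n]$ via Lemma~\ref{lemma:approx}. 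This $S$ has about $p^{d-1}$ elements, and any real $k$-dimensional linear subspace meets $S$ in at most $(p^k-1)/(p-1)\leq 2p^{k-1}$ points, because after reducing mod $p$ those points lie in a $k$-dimensional $\mathbb{F}_p$-subspace and belong to distinct projective classes. Dividing gives the same lower bound $\Omega(p^{d-k})=\Omega(n^{d(d-k)/(d-1)})$ on the number of covering subspaces. The trade-off: your route piggybacks on the strong ``constant points per subspace'' guarantee from the random algebraic construction, whereas the paper's direct route accepts a weaker $O(p^{k-1})$ points-per-subspace bound but compensates with a correspondingly larger set $S$, and is entirely self-contained and elementary. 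Both are valid; the paper includes the second precisely because it wants a proof of the covering bound that does not rest on Theorem~\ref{thm:linear}.
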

	
	We will give a very short alternative proof of Corollary \ref{cor:covering} as well, which does not rely on Theorem~\ref{thm:linear}. Finally, let us remark that $c=c(d,k)$ denotes the same function in Theorems \ref{thm:main}, \ref{thm:affine} and \ref{thm:linear}.
	
	\subsection{Point-hyperplane incidences}
	
	One of the fundamental results in combinatorial geometry is the Szemer\'edi-Trotter theorem \cite{SzT}, which states that the number of incidences between $n$ points and $m$ lines is $O((mn)^{2/3}+m+n)$, and this bound is the best possible. Extending this result to higher dimensions is a notorious open problem. Given a set of points $P$ and set of hyperplanes $\mathcal{H}$ in $\mathbb{R}^d$, let $I(P,\mathcal{H})$ denote the number of incidences between $P$ and $\mathcal{H}$, that is, the number of pairs $(p,H)\in P\times \mathcal{H}$ such that $p\in H$. Note that in $\mathbb{R}^{3}$, by taking $n$ points on a single line and $m$ planes containing this line, we have a collection of $n$ points and $m$ planes with $mn$ incidences. Therefore, in order to avoid this triviality, we forbid a complete bipartite graph $K_{c,c}$ in the incidence graph of the configuration. I.e., if $P$ is a set of $n$ points and $\mathcal{H}$ is a set of $m$ hyperplanes in $\mathbb{R}^d$, we are interested in the maximum of $I(P,\mathcal{H})$ as a function of $m$ and $n$ assuming there are no $c$ hyperplanes containing the same $c$ points. Let $f(d,n,m,c)$ denote this maximum.
	
	It follows from works of Chazelle \cite{Ch}, Brass and Knauer \cite{BK03} and Apfelbaum and Sharir \cite{AS} that $$f(d,n,m,c)=O_{d,c}((mn)^{1-1/(d+1)}+m+n).$$
	However, this bound is only known to be sharp in case $d=2$.  Brass and Knauer \cite{BK03} observed that large sets of lattice points satisfying the conditions of Theorems \ref{thm:affine} and \ref{thm:linear} can be used to provide lower bounds for $f(d,n,m,c)$. For every pair of integers $m$ and $n$, and real number $\varepsilon>0$, they showed that there exists $c$ such that 
	$$f(d,n,m,c)\geq \begin{cases} 
		(mn)^{1-2/(d+3)-\varepsilon} &\mbox{ if }d\mbox{ is odd and }d>3,\\
		(mn)^{1-2(d+1)/(d+2)^2-\varepsilon} &\mbox{ if }d\mbox{ is even},\\
		\Omega((mn)^{7/10})&\mbox{ if }d=3.
	\end{cases}$$
	By improving the known lower bounds on $\ell(d,k,n,c)$, Balko, Cibulka, and Valtr \cite{BCV} improved the lower bounds on $f(d,n,m,c)$ as well for $d\geq 4$. By using Theorems \ref{thm:affine} and \ref{thm:linear}, we further improve their result, and as these theorems are optimal (up to the value of $c$), we reach the full potential of the approach outlined by Brass and Knauer \cite{BK03}. 
	
	\begin{theorem}\label{thm:incidence}
		For every positive integer $d$ there exists $c$ such that the following holds. Let $m,n$ be positive integers, then there exists a set of $n$ points $P$ and a set of $m$ hyperplanes $\mathcal{H}$ in $\mathbb{R}^{d}$ such that the incidence graph of $P$ and $\mathcal{H}$ is $K_{c,c}$-free, and 
		$$I(P,\mathcal{H})\geq \begin{cases} 
			\Omega_{d}((mn)^{1-(2d+3)/(d+2)(d+3)}) &\mbox{ if }d\mbox{ is odd,}\\
			\Omega_{d}((mn)^{1-(2d^2+d-2)/(d+2)(d^2+2d-2)}) &\mbox{ if }d\mbox{ is even}.
		\end{cases}$$
	\end{theorem}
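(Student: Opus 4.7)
The plan is to apply the point-hyperplane incidence construction of Brass and Knauer~\cite{BK03}, using Theorems~\ref{thm:affine} and~\ref{thm:linear} as sharp inputs in place of the probabilistic evasive sets originally employed in~\cite{BK03}. As the authors emphasize, since Theorems~\ref{thm:affine} and~\ref{thm:linear} are optimal up to the value of $c$, this substitution realizes the full quantitative strength of the Brass--Knauer method, which is exactly the content of the claim.

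Concretely, fix $d$ and parameters $N$, $M$, and choose an integer $k$ depending on the parity of $d$. Let $P \subset [N]^d$ be either a $(k, c_0)$-subspace evasive set of size $\Omega_d(N^{d-k})$ supplied by Theorem~\ref{thm:affine}, or a $(k, c_0)$-linear-evasive set of size $\Omega_d(N^{d(d-k)/(d-1)})$ supplied by Theorem~\ref{thm:linear}; the choice of theorem and of $k$ is dictated by the parity of $d$ and by whether the hyperplanes of $\mathcal{H}$ are taken to be affine or linear. Let $\mathcal{H}$ consist of hyperplanes whose defining equations have small integer coefficients, parameterised by a box in $\mathbb{Z}^{d+1}$ of side length $M$, so that $|\mathcal{H}| = \Theta_d(M^{d+1})$ (with the analogous count $\Theta_d(M^d)$ in the linear case). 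A standard lattice-point count gives that each hyperplane in $\mathcal{H}$ is incident to $\Theta_d(N^{d-1}/M)$ points of $[N]^d$, and, after a random translation of the evasive set to enforce equidistribution, to a proportional number of points of $P$.

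For the $K_{c,c}$-freeness of the incidence graph, note that any $c$ hyperplanes in $\mathbb{R}^d$ intersect in an (affine or linear) subspace of dimension at most $d-c$. Hence if $c$ hyperplanes of $\mathcal{H}$ share $c$ common points of $P$, these points all lie in a common subspace of dimension $\leq d-c$. Choosing $c = \max(c_0+1,\, d-k+1)$, we ensure that $d-c \leq k$, so the $(k, c_0)$-evasive property of $P$ forces this subspace to contain at most $c_0 < c$ points of $P$, a contradiction.

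The final step is the parameter optimisation. With the setup above, the number of incidences works out to $I(P, \mathcal{H}) \geq \Omega_d(|P| \cdot |\mathcal{H}| / (NM))$, and substituting the sizes of $P$ and $\mathcal{H}$ and eliminating $N$, $M$ in favour of $n$ and $m$ yields a bound of the form $(mn)^{\alpha(d, k)}$ for an explicit exponent $\alpha(d, k)$. The principal obstacle is then the choice of $k$ (and of whether to use Theorem~\ref{thm:affine} or Theorem~\ref{thm:linear}, and correspondingly affine or linear $\mathcal{H}$) as a function of the parity of $d$, in order to match the specific fractions $1 - (2d+3)/((d+2)(d+3))$ for odd $d$ and $1 - (2d^2+d-2)/((d+2)(d^2+2d-2))$ for even $d$. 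This is a careful but routine optimisation within the Brass--Knauer framework, now with sharp evasive-set inputs; no further ideas beyond the tighter evasive-set bounds are required.
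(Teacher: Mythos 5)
The central gap is your $K_{c,c}$-freeness argument. You claim that ``any $c$ hyperplanes in $\mathbb{R}^d$ intersect in a subspace of dimension at most $d-c$.'' This is false: it holds only when the $c$ normal vectors are linearly independent. If your hyperplanes come from a ``box in $\mathbb{Z}^{d+1}$ of side length $M$,'' many subsets of $c$ of them will have normal vectors spanning a low-dimensional subspace (even just a plane), so their common intersection can have dimension as large as $d-2$. Since $P$ is only $(k,c_0)$-evasive with $k \approx d/2$, such an intersection can easily contain far more than $c_0$ points of $P$, and the incidence graph will not be $K_{c,c}$-free. This is precisely the point where the construction is forced to be more careful, and where Theorem~\ref{thm:linear} actually enters.

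The paper's construction uses \emph{both} Theorems~\ref{thm:affine} and~\ref{thm:linear} simultaneously, independent of the parity of $d$ (the parity only affects the choice $k=\lfloor d/2\rfloor-1$ and the resulting exponent). The set $P$ comes from Theorem~\ref{thm:affine}, while the \emph{normal vectors} of the hyperplanes form a set $N$ that is $(d-k-1,c_2)$-linear-evasive, supplied by Theorem~\ref{thm:linear}. The linear-evasiveness of $N$ guarantees that any $c_2+1$ normal vectors span a subspace of dimension at least $d-k$, hence any $c_2+1$ hyperplanes of $\mathcal{H}$ intersect in a subspace of dimension at most $k$, which by the evasiveness of $P$ contains at most $c_1$ points. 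Moreover, $\mathcal{H}$ is not all hyperplanes with small coefficients: it is only those hyperplanes with normal vector in $N$ that contain at least one point of $P$, so that for each normal vector the corresponding hyperplanes partition $P$ and one gets $I(P,\mathcal{H})=|P||N|$ directly, with no lattice-point equidistribution argument or random translation needed. Your outline captures the general Brass--Knauer spirit, but without evasive normal vectors the bipartite-freeness claim fails, and without it the whole bound collapses.
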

	
	\noindent	
	In certain asymmetric settings, i.e when $n$ is much larger than $m$, better bounds are known, see \cite{S16}.
	
	\medskip
	
	The rest of this paper is organized as follows. In Section \ref{sect:lowerbound}, we prove Theorems \ref{thm:lowerbound} and \ref{thm:affine}. Then, in Section \ref{sect:vectors}, we prove Theorem \ref{thm:main}. In Section \ref{sect:covering}, we prove Theorem \ref{thm:linear}, and give an alternative proof of Corollary \ref{cor:covering}. Finally, in Section \ref{sect:incidence}, we give a proof sketch of Theorem \ref{thm:incidence}.
	
	\section{Lower bounds for evasiveness}\label{sect:lowerbound}
	
	In this section, we prove Theorems \ref{thm:lowerbound} and \ref{thm:affine}. In order to prove Theorem \ref{thm:lowerbound}, we consider a variant of the Erd\H{o}s Box theorem \cite{E64}. This theorem is a generalization of the K\H{o}v\'ari-S\'os-Tur\'an theorem \cite{KST}, providing upper bounds on the maximum number of edges of an $r$-partite $r$-uniform hypergraph with parts of size $n$ containing no copy of the complete $r$-partite $r$-uniform hypergraph $K_{s_1,\dots,s_r}$. As we require a version of the Box theorem in which the parts of the host hypergraph have different sizes (which is not a standard setting), we present a short proof of the result that we need. With slight abuse of notation, given an $r$-uniform $r$-partite hypergraph $H$ with vertex classes $V_1,\dots,V_r$, we view edges of $H$ as both $r$-element subsets of the vertex set, and elements of the Cartesian product $V_1\times\dots\times V_r$. We also denote by $X^{(s)}$ all $s$-element subsets of the set $X$.
	
	\begin{lemma}\label{lemma:boxthm}
		Let $r$ and $s_1,\dots,s_r\geq 2$ be positive integers. Let $H$ be an $r$-partite $r$-uniform hypergraph with vertex classes $V_1,\dots,V_r$ such that $|V_i|\geq s_i^2|V_r|^{\frac{1}{s_i\dots s_{r-1}}}$ for $i\in [r-1]$. If $H$ has at least $$2s_r^{\frac{1}{s_1\dots s_{r-1}}}|V_1|\dots |V_{r-1}||V_r|^{1-\frac{1}{s_1\dots s_{r-1}}}$$ edges, then there exists $S_1\subset V_1,\dots,S_r\subset V_r$ such that $|S_i|=s_i$ for $i\in [r]$, and $S_1\times \dots\times S_r\subset E(H)$.
	\end{lemma}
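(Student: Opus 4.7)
My plan is to prove the lemma by induction on $r$, at each step peeling off the part $V_1$ via a Kővári--Sós--Turán style double-counting argument; the choice of $V_1$ as the part to remove is crucial because it naturally preserves the size hypotheses for the next invocation.

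The base case $r=2$ is a quantitative version of Kővári--Sós--Turán. Setting $d(v) = |N(v) \cap V_1|$ for $v \in V_2$, if $H$ is $K_{s_1,s_2}$-free one has $\sum_{v \in V_2}\binom{d(v)}{s_1} \leq (s_2-1)\binom{|V_1|}{s_1}$, while convexity gives $\sum_{v \in V_2}\binom{d(v)}{s_1} \geq |V_2|\binom{|E(H)|/|V_2|}{s_1}$. The hypotheses imply $|E(H)|/|V_2| \geq s_1^2$, so $\binom{|E(H)|/|V_2|}{s_1} \geq (|E(H)|/|V_2|)^{s_1}/(2\,s_1!)$, where I use the elementary estimate $\prod_{i=0}^{s-1}(1-i/x) \geq 1/2$ whenever $x \geq s^2$. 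Rearranging contradicts the assumed edge bound, so $K_{s_1,s_2}$ must be present.

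For the inductive step $r \geq 3$, write $t = s_1 \cdots s_{r-1}$ and $t' = s_2 \cdots s_{r-1}$. For each $A \in \binom{V_1}{s_1}$ let
\[
U(A) = \bigl\{(v_2,\dots,v_r) \in V_2 \times \cdots \times V_r : \{a,v_2,\dots,v_r\} \in E(H) \text{ for every } a \in A\bigr\},
\]
viewed as an $(r-1)$-partite $(r-1)$-uniform hypergraph on $V_2,\dots,V_r$. Writing $d(v_2,\dots,v_r) = |\{v_1 \in V_1 : (v_1,\dots,v_r) \in E(H)\}|$, double counting together with Jensen's inequality yields
\[
\sum_{A \in \binom{V_1}{s_1}}|U(A)| \;=\; \sum_{(v_2,\dots,v_r)}\binom{d(v_2,\dots,v_r)}{s_1} \;\geq\; |V_2|\cdots|V_r|\cdot\binom{\bar d}{s_1},
\]
where $\bar d = |E(H)|/(|V_2|\cdots|V_r|)$. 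The hypothesis $|V_1| \geq s_1^2|V_r|^{1/t}$ together with the edge bound gives $\bar d \geq 2\,s_1^2\,s_r^{1/t} \geq s_1^2$, hence $\binom{\bar d}{s_1} \geq \bar d^{s_1}/(2\,s_1!)$. Dividing by $\binom{|V_1|}{s_1} \leq |V_1|^{s_1}/s_1!$ and using the exponent identity $s_1/t = 1/t'$, the averaging bound collapses to
\[
\frac{1}{\binom{|V_1|}{s_1}}\sum_A|U(A)| \;\geq\; 2^{s_1-1}\,s_r^{1/t'}\,|V_2|\cdots|V_{r-1}|\cdot|V_r|^{1-1/t'}.
\]
Since $s_1 \geq 2$, some $A^*$ achieves $|U(A^*)| \geq 2\,s_r^{1/t'}|V_2|\cdots|V_{r-1}|\cdot|V_r|^{1-1/t'}$, which is precisely the edge threshold for the lemma applied to the $(r-1)$-partite hypergraph $U(A^*)$ on $V_2,\dots,V_r$; the size conditions $|V_i| \geq s_i^2|V_r|^{1/(s_i\cdots s_{r-1})}$ for $2 \leq i \leq r-1$ needed at this step are a subset of the original hypotheses. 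The inductive hypothesis produces a box $S_2 \times \cdots \times S_r \subset U(A^*)$ with $|S_i|=s_i$, and taking $S_1 = A^*$ gives $S_1 \times \cdots \times S_r \subset E(H)$.

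The main quantitative obstacle is preserving the constant $2$ in the edge hypothesis under iterated application of the above step: a naive use of $\binom{\bar d}{s_1} \geq (\bar d/2)^{s_1}/s_1!$ would lose a factor of $2^{s_1}$ per inductive step, which would force a rapidly worsening constant. The condition $|V_i| \geq s_i^2|V_r|^{1/(s_i\cdots s_{r-1})}$ is designed precisely to push $\bar d$ above $s_1^2$ and keep the loss to a single factor of $2$; the remaining $2^{s_1-1} \geq 2$ then exactly absorbs this loss and returns the same constant $2$ at the next level.
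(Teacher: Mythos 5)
Your proof is correct and follows essentially the same induction and double-counting argument as the paper's, peeling off $V_1$ via convexity of $\binom{x}{s_1}$, noting that $\bar d\geq s_1^2$ lets one bound $\binom{\bar d}{s_1}\geq \bar d^{s_1}/(2s_1!)$, and observing that the resulting $2^{s_1-1}\geq 2$ restores the constant for the next level. The only cosmetic difference is the base case: the paper starts the induction at $r=1$ (where the edge count $\geq 2s_1$ trivially yields $S_1$), while you start at $r=2$ with a direct K\H{o}v\'ari--S\'os--Tur\'an-style contradiction.
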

	
	\begin{proof}
	We prove this by induction on $r$. In case $r=1$, $H$ has at least $2s_1$ edges, so the statement is true. Let us assume that $r\geq 2$. Let $U=V_2\times\dots\times V_r$ and let 
	$$t\geq 2s_r^{\frac{1}{s_1\dots s_{r-1}}}|V_1|\dots |V_{r-1}||V_r|^{1-\frac{1}{s_1\dots s_{r-1}}}$$
	be the number of edges of $H$. For each $f\in U$, let $d(f)$ denote the number of edges of $H$ containing $f$. Also, for every set of vertices $W\subset V_1$, let 
		$$N(W)=\{f\in U:\forall v\in W,\{v\}\cup f\in E(H)\}.$$ Then we have the following equality:
		$$\sum_{W\in V_1^{(s_1)}}|N(W)|=\sum_{f\in U}\binom{d(f)}{s_1}.$$
		By the convexity of the function $\binom{x}{s_1}$, and recalling that $\sum_{f\in U}d(f)=t$, we can write the following inequality:
		$$\sum_{f\in U}\binom{d(f)}{s_1}\geq |U|\binom{t/|U|}{s_1}\geq \frac{t^{s_1}}{2s_1!|U|^{s_1-1}}.$$
		The last inequality holds by the condition $t/|U|\geq 2|V_1||V_r|^{-\frac{1}{s_1\dots s_{r-1}}}>2s_1^2$.
	Therefore, by the pigeonhole principle, there exists $S_1\in V_1^{(s_1)}$ such that 
		$$|N(S_1)|\geq \frac{t^{s_1}}{2s_1!|U|^{s_1-1}\binom{|V_1|}{s_1}}\geq \frac{t^{s_1}}{2|V_1|^{s_1}|U|^{s_1-1}}
\geq 2s_r^{\frac{1}{s_2\dots s_{r-1}}}|V_2|\dots |V_{r-1}||V_r|^{1-\frac{1}{s_2\dots s_{r-1}}}$$
	Let $H'$ be the $(r-1)$-partite $(r-1)$-uniform hypergraph with vertex classes $V_2,\dots,V_{r}$ and set of edges $E(H')=N(S_1)$. Then we can apply our induction hypothesis to conclude that there exist $S_2\subset V_2,\dots,S_r\subset V_r$ such that $|S_i|=s_i$ for $i=2,\dots,r$, and $S_2\times\dots\times S_r\subset E(H')$. But then $S_1\times \dots\times S_r\subset V_1\times\dots\times V_r$, so $S_1,\dots,S_r$ satisfy the required properties. 

%		Let $U=V_2\times\dots \times V_r$, $u=s_1\dots s_{r-1}$, and $c=2s_r^{\frac{1}{u}}$.We prove by induction on $r$ that if $H$ has at least 
%		$$t= |U|\cdot \max\left\{2s^2,c|V_1||V_r|^{-\frac{1}{u}} \right\}$$ edges, then we can find the desired sets $S_1,\dots,S_r$.
		
%		In case $r=1$, we have $t\geq 2s_1$, so the statement holds trivially. Let us assume that $r\geq 2$. For each $f\in U$, let $d(f)$ denote the number of edges of $H$ containing $f$. Also, for every set of vertices $W\subset V_1$, let 
%		$$N(W)=\{f\in U:\forall v\in W,\{v\}\cup f\in E(H)\}.$$ Then we have the following equality:
%		$$\sum_{W\in V_1^{(s_1)}}|N(W)|=\sum_{f\in U}\binom{d(f)}{s_1}.$$
%		By the convexity of the function $\binom{x}{s_1}$, and recalling that $\sum_{f\in U}d(f)=t$, we can write the following inequality:
%		$$\sum_{f\in U}\binom{d(f)}{s_1}\geq |U|\binom{t/|U|}{s_1}\geq \frac{t^{s_1}}{2s_1!|U|^{s_1-1}}.$$
%		The last inequality holds by the assumption $t/|U|>2s_1^2$. Therefore, by the pigeonhole principle, there exists $S_1\in V_1^{(s_1)}$ such that 
%		\begin{align*}|N(S_1)|&\geq \frac{t^{s_1}}{2s_1!|U|^{s_1-1}\binom{|V_1|}{s_1}}\geq \frac{t^{s_1}}{2|V_1|^{s_1}|U|^{s_1-1}}.%\\
%\geq& 2s_r^{\frac{1}{s_2\dots s_{r-1}}}|V_2|\dots |V_{r-1}||V_r|^{1-\frac{1}{s_2\dots s_{r-1}}}+2s^2 |V_3|\dots |V_{r}|,
%		\end{align*}
%		Write $U'=V_3\times\dots\times V_{r}$, $u'=s_2\dots s_{r-1}$, and $c'=2s_r^{\frac{1}{u'}}$. Here, on one hand, we have
%		$$\frac{t^{s_1}}{2|V_1|^{s_1}|U|^{s_1-1}}\geq \frac{(c|U||V_1||V_r|^{-\frac{1}{u}})^{s_1}}{2|V_1|^{s_1}|U|^{s_1-1}}\geq c'|U'||V_2||V_r|^{-\frac{1}{u'}}.$$
		
%	
	\end{proof}
	
	Now we are ready to prove Theorem \ref{thm:lowerbound}.
	
	\begin{proof}[Proof of Theorem \ref{thm:lowerbound}]
		Let us introduce some parameters. Let $r=\lfloor \log_2 (1/\varepsilon)\rfloor-1$, then we may assume that $r\leq k$, otherwise the statement of the theorem is vacuous. For $i\in [r-1]$, let $t_i=\lceil \frac{d2^{i+1-r}}{3}\rceil$, and set $T=t_1+\dots+t_{r-1}$. Observe that $T<\frac{2d}{3}$,  assuming $d$ is sufficiently large with respect to $r$. Furthermore,  for $i=1,\dots,r-1$, let $V_i=\mathbb{F}^{t_i}$, and let $V_{r}=\mathbb{F}^{d-T}$. We will view $\mathbb{F}^d$ as the Cartesian product $V_1\times\dots\times V_{r}$. Define the $r$-partite $r$-uniform hypergraph $H$ on the vertex classes $V_1,\dots,V_r$ such that $v\in V_1\times\dots\times V_r$ is an edge if $v\in S$.
		
		We would like to apply Lemma \ref{lemma:boxthm} with  $s_1=\dots=s_{r-1}=2$ and $s_r=k-r+2$ to the hypergraph $H$ to find suitable sets $S_1,\dots,S_r$. However, in order to do this, we need to verify that $H$ satisfies the conditions of the lemma. First of all, for $i\in [r-1]$, we have 
		$$|V_i|=|\mathbb{F}|^{t_i}\geq |\mathbb{F}|^{\frac{d2^{i+1-r}}{3}}\geq 4|\mathbb{F}|^{(d-T)2^{i-r}}=s_i^{2}|V_r|^{\frac{1}{s_i\dots s_{r-1}}},$$
		where the second inequality holds assuming $d$ is sufficiently large with respect to $r$. Furthermore, note that $\frac{1}{8\varepsilon}<s_1\dots s_{r-1}=2^{r-1}\leq \frac{1}{4\varepsilon}$, and $$\frac{d-T}{s_1\dots s_{r-1}}\geq 4\varepsilon(d-T)>\frac{4\varepsilon d}{3}.$$ Therefore, we can write
		$$2s_r^{\frac{1}{s_1\dots s_{r-1}}}|V_1|\dots |V_{r-1}||V_r|^{1-\frac{1}{s_1\dots s_{r-1}}}<2k^{8\varepsilon}|\mathbb{F}|^{d(1-4\varepsilon/3)}\leq |S|.$$ 
		Here, the last inequality holds by assuming $d$ is sufficiently large with respect to $k$. Thus, the conditions of Lemma \ref{lemma:boxthm} are satisfied, so we can find $S_1\subset V_1,\dots,S_{r}\subset V_r$ such that $|S_i|=s_i$ for $i\in [r]$, and $W=S_1\times \dots\times S_r\subset S$. Let $S_i=\{u_i,v_i\}$ for $i\in [r-1]$, and let $S_r=\{w_0\dots w_{k-r+1}\}$. Given $w\in V_i$ for some $i\in [r]$, let $w'\in \mathbb{F}^d$ denote the vector which agrees with $w$ on $V_i$, and vanishes on all other coordinates. Then $W$ is contained in the affine subspace 
		$$\left(w_0'+\sum_{i=1}^{r-1}u_1'\right)+\mbox{span}\langle\{v_i'-u_i': i\in [r-1]\}\cup \{w_i'-w_0':i\in [k-r+1]\}\rangle,$$
		which clearly has dimension at most $k$. Finally, as $|W|=2^{r-1}(k-r+1)>\frac{k-\log_2(1/\varepsilon)}{8\varepsilon}$, this shows that $S$ is not $(k,\frac{k-\log_2(1/\varepsilon)}{8\varepsilon})$-subspace evasive.
	\end{proof}
	
	Finally, let us present the proof of Theorem \ref{thm:Hammingbound}.
	
	\begin{proof}[Proof of Theorem \ref{thm:Hammingbound}]
		For the convenience of the reader, we  first  recall the proof of the Hamming bound, that is (\ref{eq:Hamming}). Let $S\subset \mathbb{F}^{d}$ be a set of vectors such that no $k$-dimensional linear subspace contains $k+1$ elements. Without loss of generality, assume that $S$ spans $\mathbb{F}^{d}$. Let $M\in\mathbb{F}^{d\times |S|}$ be a matrix, whose columns are the elements of $S$. Then $L=\mbox{ker}(M)<\mathbb{F}^{|S|}$ does not contain a vector with at most $k+1$ non-zero coordinates, which implies that $L$ is an $[|S|,|S|-d,k+2]$-code. Hence, the Hamming balls of radius $r=\lfloor \frac{k+1}{2}\rfloor$ around the elements of $L$ are disjoint. The size of such a ball is at least $(|\mathbb{F}|-1)^r\binom{|S|}{r}\geq 2^{-r}|\mathbb{F}|^{r}\binom{|S|}{r}$, which gives that $|L|\cdot 2^{-r}|\mathbb{F}|^r\binom{|S|}{r}\leq |\mathbb{F}|^{|S|}$. From this, we get $\binom{|S|}{r}\leq 2^{r}|\mathbb{F}|^{d-r}$, which further implies $|S|\leq 2k|\mathbb{F}|^{d/r-1}$.
		
		\medskip
		
		Now let us turn to the proof of Theorem \ref{thm:Hammingbound}. Write $k=k_1+\dots+k_{C+1}$, where $k_i\in \{\lfloor \frac{k}{C+1}\rfloor,\lceil \frac{k}{C+1}\rceil\}$ for $i\in [C+1]$. Note that
		$\lfloor \frac{k_i+1}{2}\rfloor>\lfloor \frac{k}{2(C+1)}\rfloor.$
		Hence, by the above discussion, if $|S|\geq 2k|\mathbb{F}|^{\frac{d}{\lfloor k/2(C+1)\rfloor}}+k+C+1$, we can find  disjoint subsets $W_1,\dots,W_{C+1}\subset S$ such that $|W_{i}|=k_i+1$ and $W_{i}$ spans a linear subspace of dimension at most $k_i$ for $i\in [C+1]$. Indeed, select $W_1,\dots,W_{C+1}$ one-by-one, at each step deleting the selected set from $S$. But then $W=W_1\cup\dots \cup W_{C+1}$ spans a linear subspace of dimension at most $k$, and $|W|=k+C+1$, showing that $S$ is not $(k,k+C)$-evasive.
	\end{proof}
	
	\section{Optimal constructions of evasive sets}\label{sect:vectors}
	
	In this section, we give an alternative proof of Theorem \ref{thm:main}. Our proof is based on the random algebraic method pioneered by Bukh, and uses the ideas from his paper \cite{Bukh}. With slight abuse of notation, let us exchange $k$ with $d-k$ for our (and the reader's) future convenience, so we prove the following equivalent formulation of Theorem \ref{thm:main}.
	
	\begin{theorem}\label{thm:main2}
		For every pair of positive integers $k,d$ satisfying $k\leq d$, there exists a positive integer $c=c(d,d-k)$ such that the following holds. For every finite field $\mathbb{F}$, there exists a $(d-k,c)$-subspace evasive set of size $|\mathbb{F}|^{k}$ in $\mathbb{F}^{d}$.
	\end{theorem}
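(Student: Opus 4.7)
The plan is to use Bukh's random algebraic method. Fix a large integer $D = D(d,k)$, and let $\mathcal{P}_D$ denote the $\mathbb{F}$-vector space of polynomials in $\mathbb{F}[x_1,\dots,x_d]$ of degree at most $D$. Sample $P_1,\dots,P_{d-k}$ independently and uniformly from $\mathcal{P}_D$, and set $V = \{x \in \mathbb{F}^d : P_1(x) = \dots = P_{d-k}(x) = 0\}$. Since the evaluation map $\mathcal{P}_D \to \mathbb{F}$, $P \mapsto P(x)$, is surjective and linear for any fixed $x$, the vector $(P_1(x),\dots,P_{d-k}(x))$ is uniform on $\mathbb{F}^{d-k}$; hence $\mathbb{E}|V| = |\mathbb{F}|^k$. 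Likewise, for every affine $(d-k)$-dimensional subspace $A \subset \mathbb{F}^d$ the restriction $\mathcal{P}_D \to \mathcal{P}_D(A)$ is surjective and linear, so the restrictions $P_1|_A,\dots,P_{d-k}|_A$ are independent uniform polynomials of degree $\le D$ on $A \cong \mathbb{F}^{d-k}$, and $V \cap A$ is distributed as their common zero set.

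Next I would run a first-moment / deletion argument. Pick $c = c(d,k)$ large enough in terms of $D$, and call a $(c+1)$-element subset $T \subset \mathbb{F}^d$ \emph{bad} if its affine hull has dimension at most $d-k$. The key estimate to establish is
\[
    \mathbb{E}\bigl[\,\#\{T \subset V : T \text{ is bad}\}\,\bigr] \le \tfrac{1}{2}|\mathbb{F}|^k.
\]
For each fixed $(c+1)$-tuple $T$, one has $\Pr[T \subset V] = |\mathbb{F}|^{-r(d-k)}$, where $r \le c+1$ is the rank of the joint evaluation map $\mathcal{P}_D \to \mathbb{F}^{c+1}$ at the points of $T$. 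The plan is to partition the bad tuples by \emph{degeneracy type}, i.e.\ by the dimension of their affine hull together with their evaluation rank, then count the tuples in each class via Schwartz--Zippel-type bounds on the constraining algebraic varieties, and finally sum the weighted contributions to obtain the displayed estimate. Once this is in hand, one fixes a realization of $P_1,\dots,P_{d-k}$ with $|V| - \#\{\text{bad tuples in } V\} \ge \tfrac{1}{2}|\mathbb{F}|^k$, deletes one point from each bad tuple, and checks that the resulting set has size at least $\tfrac{1}{2}|\mathbb{F}|^k$ and is $(d-k, c)$-subspace evasive; extracting a subset of the appropriate size (after absorbing a constant into $c$) completes the proof.

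The main obstacle is precisely this counting. The total number of affine $(d-k)$-subspaces of $\mathbb{F}^d$ is $\Theta\bigl(|\mathbb{F}|^{k(d-k+1)}\bigr)$, so a naive union bound over affine subspaces is unavailable; one is forced to balance two opposing contributions. Full-rank $(c+1)$-tuples spanning an affine $(d-k)$-subspace are abundant but have only the small probability $|\mathbb{F}|^{-(c+1)(d-k)}$ of lying in $V$, whereas rank-deficient tuples have a larger probability of lying in $V$ but are confined to thin algebraic subvarieties and hence are few. Tuning $D$ and $c$ against each other so that the total contribution stays below $\tfrac{1}{2}|\mathbb{F}|^k$ is the crux; both parameters will end up being rapidly growing (but $|\mathbb{F}|$-independent) functions of $d, k$, yielding a constant $c(d,k)$ considerably worse than Dvir and Lovett's $c = d^k$, consistent with the authors' remark. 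When $|\mathbb{F}|$ is bounded by a function of $d, k$, the argument is replaced by the trivial observation that any set of $|\mathbb{F}|^k$ points in $\mathbb{F}^d$ is $(d-k, |\mathbb{F}|^{d-k})$-subspace evasive, and $|\mathbb{F}|^{d-k}$ is then also bounded.
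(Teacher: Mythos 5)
Your general strategy---Bukh's random algebraic method---is indeed what the paper uses, and your construction (the random variety $V=\{P_1=\dots=P_{d-k}=0\}$ in $\mathbb{F}^d$, versus the paper's random parameterized set $S=\{\mathbf{q}(\mathbf{z}):\mathbf{z}\in\mathbb{F}^k\}$ for a random polynomial map $\mathbf{q}:\mathbb{F}^k\to\mathbb{F}^d$) is a legitimate dual variant. However, the central ingredient of the proof is missing, and the substitute you propose does not close.

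The ingredient you omit is the dichotomy lemma (Lemma~5 in the paper, from Bukh's random algebraic paper): a variety in $\mathbb{F}^m$ cut out by $m$ polynomials of degree at most $D$ has size either at most a constant $c=c(m,D)$ or at least $|\mathbb{F}|-c|\mathbb{F}|^{1/2}$. This is precisely what makes a direct union bound over affine $(d-k)$-subspaces work, contrary to your claim that it is ``unavailable.'' For a fixed affine $(d-k)$-subspace $A$, the set $V\cap A$ is the zero set of $d-k$ random polynomials of degree at most $D$ on $A\cong\mathbb{F}^{d-k}$; by $D$-wise independence of the evaluations one gets $\mathbb{E}(|V\cap A|^{D})=O_{D}(1)$, and combining this moment bound with the dichotomy yields $\Pr\bigl[|V\cap A|>c\bigr]=\Pr\bigl[|V\cap A|\ge|\mathbb{F}|/2\bigr]=O_{D}(|\mathbb{F}|^{-D})$. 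Choosing $D>k(d-k+1)$ then beats the $\Theta(|\mathbb{F}|^{k(d-k+1)})$ affine $(d-k)$-subspaces, and the union bound finishes the proof. This is exactly what the paper does (in the dual picture).

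Without the dichotomy, your first-moment/deletion scheme cannot succeed as stated. Take already $d=2$, $k=1$: there are $\Theta(|\mathbb{F}|^2)$ lines, each with $|\mathbb{F}|$ points and thus $\Theta(|\mathbb{F}|^{c+1})$ bad $(c+1)$-tuples. For distinct points on a line the evaluation map has full rank $c+1$ whenever $D\ge c$ (Lagrange interpolation), so each bad tuple lies in $V$ with probability exactly $|\mathbb{F}|^{-(c+1)}$. Hence the expected number of bad tuples is $\Theta(|\mathbb{F}|^{2})\gg|\mathbb{F}|^{k}=|\mathbb{F}|$, and your target bound $\mathbb{E}[\#\{\text{bad }T\subset V\}]\le\tfrac12|\mathbb{F}|^{k}$ is false by a polynomial factor; $|V|-\#\{\text{bad tuples}\}$ is wildly negative. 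The conceptual reason is that when a line hits $V$ in more than $c$ points, it hits it in on the order of $|\mathbb{F}|$ points (the dichotomy again), so $\binom{|V\cap A|}{c+1}$ is enormous on these rare events and they dominate the first moment. No classification by degeneracy type or Schwartz--Zippel refinement repairs this; the remedy is to stop counting tuples and instead bound, via Bukh's lemma and Markov on the $D$-th moment, the probability that any single subspace over-intersects. (One further small gap worth noting: you assert $\mathbb{E}|V|=|\mathbb{F}|^k$ but do not show concentration; a second-moment computation, which you essentially already have, gives $|V|\ge|\mathbb{F}|^k/2$ with high probability.)
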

	
	Let $D=(d+1)k+1$, $p=|\mathbb{F}|$, and let $\mathcal{Q}_{D}<\mathbb{F}[x_1,\dots,x_k]$ denote the space of polynomials of (total) degree at most $D$ on $k$ variables. Write $$\Lambda_{D}=\{\alpha\in\mathbb{N}^{k}:\alpha(1)+\dots+\alpha(k)\leq D\},$$ which is the set of possible exponents of the monomials of the polynomials in $\mathcal{Q}_{D}$. Let $q_1,\dots,q_{d}$ be random elements of $\mathcal{Q}_{D}$ chosen independently from the uniform distribution, and set $\mathbf{q}=(q_1,\dots,q_d)$. Our goal is to show that the set 
	$$S=\{\mathbf{q}(\mathbf{x}):\mathbf{x}\in\mathbb{F}_p^k\}$$
	has the property that the no $(d-k)$-dimensional affine subspace of $\mathbb{F}^{d}$ contains more than $c$ elements of $H$ with high probability, if $c$ is sufficiently large with respect to $k$ and $d$.
	
	We prepare the proof of this with a number of claims. First, let us state three simple observations that we will use repeatedly.
	\begin{itemize}
		\item[(i)] 	If $M\in\mathbb{F}^{k\times d}$ has rank $k$, and $\mathbf{v}\in\mathbb{F}^{d}$ is chosen randomly from the uniform distribution, then $M\mathbf{v}$ is uniformly distributed in $\mathbb{F}^{k}$.
		
		\item[(ii)]  If $X_1,\dots,X_d$ are uniformly distributed random variables in $\mathbb{F}$, then $X_1,\dots,X_d$ are independent if and only if $(X_1,\dots,X_d)$ is uniformly distributed in $\mathbb{F}^{d}$.
		
		\item[(iii)] If $X_1,\dots,X_d$ are independent, uniformly distributed random variables on $\mathbb{F}$, and $Y_1,\dots,Y_d$ are random variables on $\mathbb{F}$ such that $X_i$ and $Y_j$ are independent for any $i, j\in [d]$, then $X_1+Y_1,\dots,X_d+Y_d$ are independent and uniformly distributed. 
	\end{itemize}	
	
	\begin{claim}\label{claim:basis}
		Let $\mathbf{v}_1,\dots,\mathbf{v}_k\in \mathbb{F}^{d}$ be linearly independent vectors. Then the polynomials $\langle \mathbf{q},\mathbf{v}_1\rangle,\dots,\langle\mathbf{q},\mathbf{v}_k\rangle$ are independent and uniformly distributed in $\mathcal{Q}_{D}$.
	\end{claim}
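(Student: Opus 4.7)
The plan is to decompose each polynomial $q_j$ into its coefficients with respect to the monomial basis indexed by $\Lambda_D$, apply the basic observations (i)--(iii) coordinate-by-coordinate in this basis, and then reassemble. More precisely, for each $\alpha\in\Lambda_D$, write $c_j^{\alpha}\in\mathbb{F}$ for the coefficient of $x^{\alpha}$ in $q_j$. Since $q_1,\dots,q_d$ are independent and uniform in $\mathcal{Q}_D$, the entire collection $\{c_j^{\alpha}: j\in[d],\,\alpha\in\Lambda_D\}$ consists of independent uniform random variables in $\mathbb{F}$, and the distribution of the tuple $\mathbf{q}$ is determined by the joint distribution of these coefficients.

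For a fixed exponent $\alpha$, the coefficient of $x^{\alpha}$ in $\langle \mathbf{q},\mathbf{v}_i\rangle=\sum_{j=1}^d v_i(j)\,q_j$ equals $\langle \mathbf{c}^{\alpha},\mathbf{v}_i\rangle$, where $\mathbf{c}^{\alpha}=(c_1^{\alpha},\dots,c_d^{\alpha})$ is uniform in $\mathbb{F}^{d}$. Let $M\in\mathbb{F}^{k\times d}$ be the matrix with rows $\mathbf{v}_1,\dots,\mathbf{v}_k$; by hypothesis $M$ has rank $k$. Applying observation (i) to $M\mathbf{c}^{\alpha}$ shows that the vector $(\langle \mathbf{c}^{\alpha},\mathbf{v}_1\rangle,\dots,\langle \mathbf{c}^{\alpha},\mathbf{v}_k\rangle)$ is uniformly distributed in $\mathbb{F}^{k}$, and then observation (ii) tells us that its components are independent and uniform in $\mathbb{F}$.

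To conclude, I would combine this over all $\alpha$. Since the vectors $\mathbf{c}^{\alpha}$ for different $\alpha$ are mutually independent (the coefficients $c_j^{\alpha}$ are independent across all $j$ and $\alpha$), the vectors $(\langle \mathbf{c}^{\alpha},\mathbf{v}_1\rangle,\dots,\langle \mathbf{c}^{\alpha},\mathbf{v}_k\rangle)$ are independent across $\alpha$ as well. Hence the full collection $\{\langle\mathbf{c}^{\alpha},\mathbf{v}_i\rangle : i\in[k],\,\alpha\in\Lambda_D\}$ is a family of independent uniform random variables in $\mathbb{F}$, which is exactly the assertion that $\langle \mathbf{q},\mathbf{v}_1\rangle,\dots,\langle\mathbf{q},\mathbf{v}_k\rangle$ are independent and each uniform in $\mathcal{Q}_D$.

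There is no real obstacle here; the statement is essentially a linear-algebra observation pushed through the canonical identification $\mathcal{Q}_D\cong\mathbb{F}^{|\Lambda_D|}$. The only point that requires a little care is bookkeeping: one must make sure the independence is claimed jointly over all exponents $\alpha$, not only for a fixed one, so that the resulting polynomials (and not merely their individual coefficients) are independent in $\mathcal{Q}_D$. This is handled cleanly by noting that the map $q_j\mapsto(c_j^{\alpha})_{\alpha\in\Lambda_D}$ is an $\mathbb{F}$-linear bijection $\mathcal{Q}_D\to\mathbb{F}^{|\Lambda_D|}$, so uniformity on one side transfers to uniformity on the other.
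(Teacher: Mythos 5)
Your proof is correct and follows essentially the same approach as the paper: identify $\mathcal{Q}_D$ with $\mathbb{F}^{|\Lambda_D|}$ via coefficient vectors, observe that for each exponent $\alpha$ the vector $M\mathbf{c}^{\alpha}$ is uniform in $\mathbb{F}^k$ by observation (i), and use independence across exponents to reassemble. The only minor difference is that you explicitly invoke observation (ii) and spell out the bookkeeping over $\alpha$, which the paper leaves implicit.
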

	
	\begin{proof}
		Let $M\in\mathbb{F}^{d\times k}$ be the matrix, whose rows are $\mathbf{v}_1,\dots,\mathbf{v}_k$. For $\alpha\in \Lambda_{D}$ and $i\in [d]$, let $c_{i,\alpha}$ be the coefficient of the monomial $\mathbf{x}^{\alpha}=x_{1}^{\alpha(1)}\dots x_{k}^{\alpha(k)}$ in $q_{i}$, and let $\mathbf{c}_{\alpha}=(c_{1,\alpha},\dots,c_{d,\alpha})$. Observe that the $d\cdot|\Lambda_{D}|$ random variables $(c_{i,\alpha})_{i\in [d], \alpha\in\Lambda_{D}}$ are independent and uniformly distributed in~$\mathbb{F}$. 
		
		The coefficient of $\mathbf{x}^{\alpha}$ in $\langle \mathbf{q},\mathbf{v}_i\rangle$ is $(M\mathbf{c}_{\alpha})(i)$. As $M\mathbf{c}_{\alpha}$ is uniformly distributed in $\mathbb{F}^{k}$ and $(\mathbf{c}_{\alpha})_{\alpha\in \Lambda_{D}}$ are independent, this proves the claim.
	\end{proof}
	
	\begin{claim}\label{claim:uniform}
		Let $\mathbf{z}\in\mathbb{F}^{k}$ and $i\in [d]$. Then $q_{i}(\mathbf{z})$ is uniformly distributed in $\mathbb{F}$.
	\end{claim}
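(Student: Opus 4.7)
The plan is to expand $q_i(\mathbf{z})$ in the monomial basis and use the fact that the coefficients of $q_i$ are independent and uniform, together with observation (iii).

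First, I would write $q_i = \sum_{\alpha \in \Lambda_D} c_{i,\alpha}\mathbf{x}^\alpha$, where the coefficients $c_{i,\alpha}$ are independent random variables, each uniformly distributed on $\mathbb{F}$ (this is precisely what it means to choose $q_i$ uniformly from $\mathcal{Q}_D$, and it was already noted in the proof of Claim \ref{claim:basis}). Evaluating at $\mathbf{z}$ gives the linear combination
\[
q_i(\mathbf{z}) \;=\; \sum_{\alpha\in\Lambda_D} c_{i,\alpha}\,\mathbf{z}^\alpha.
\]
Next, I would single out the contribution of the constant term. The zero exponent $\mathbf{0}\in\Lambda_D$ satisfies $\mathbf{z}^{\mathbf{0}}=1$, so
\[
q_i(\mathbf{z}) \;=\; c_{i,\mathbf{0}} \;+\; Y, \qquad\text{where}\qquad Y \;=\; \sum_{\alpha\in\Lambda_D\setminus\{\mathbf{0}\}} c_{i,\alpha}\,\mathbf{z}^\alpha.
\]
Since $c_{i,\mathbf{0}}$ is uniform on $\mathbb{F}$ and independent of the family $(c_{i,\alpha})_{\alpha\neq\mathbf{0}}$, it is independent of $Y$. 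By observation (iii) (applied in dimension $d=1$, which is just the standard fact that the sum of a uniform $\mathbb{F}$-valued random variable and any independent $\mathbb{F}$-valued random variable is uniform), $c_{i,\mathbf{0}}+Y$ is uniformly distributed on $\mathbb{F}$, which is what we wanted.

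There is no serious obstacle here; the only thing one has to notice is that the constant monomial is never killed by the evaluation $\mathbf{z}\mapsto \mathbf{z}^{\mathbf{0}}=1$, so the independent uniform coefficient $c_{i,\mathbf{0}}$ always appears with nonzero weight in $q_i(\mathbf{z})$, and this forces the distribution to be uniform regardless of $\mathbf{z}$.
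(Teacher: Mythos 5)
Your proof is correct and is exactly the paper's argument, just spelled out in more detail: the paper's one-line proof also rests on the fact that the constant term $c_{i,\mathbf{0}}$ of $q_i$ is uniform and independent of the remaining coefficients, so adding it to the rest yields a uniform random variable.
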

	
	\begin{proof}
		This follows as the constant term of $q_i$ is uniformly distributed in $\mathbb{F}$.
	\end{proof}
	
	\begin{claim}\label{claim:independent}
		Let $s\leq \min\{D,|\mathbb{F}|^{1/2}\}$,  and let $\mathbf{z}_1,\dots,\mathbf{z}_s\in \mathbb{F}^{k}$ be pairwise distinct vectors. Then the $d\cdot s$ random variables $(q_{i}(\mathbf{z}_j))_{i\in [d],j\in [s]}$ are independent.
	\end{claim}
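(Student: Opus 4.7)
The plan is to reduce the claim to the surjectivity of a linear evaluation map. Since $q_1,\dots,q_d$ are independent elements of $\mathcal{Q}_D$ and the family $(q_i(\mathbf{z}_j))_{j\in[s]}$ is a function of $q_i$ alone, the $d$ groups are jointly independent. Hence it suffices to prove, for a fixed $i\in[d]$, that the $s$ random variables $q_i(\mathbf{z}_1),\dots,q_i(\mathbf{z}_s)$ are independent. Each of these is uniform in $\mathbb{F}$ by Claim~\ref{claim:uniform}, so by observation~(ii) the task reduces to showing that the vector $(q_i(\mathbf{z}_1),\dots,q_i(\mathbf{z}_s))\in\mathbb{F}^s$ is uniformly distributed as $q_i$ ranges uniformly over $\mathcal{Q}_D$. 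Identifying $\mathcal{Q}_D$ with $\mathbb{F}^{|\Lambda_D|}$ via coefficients, the evaluation map $E\colon q\mapsto (q(\mathbf{z}_1),\dots,q(\mathbf{z}_s))$ is linear, and observation~(i) converts the goal into the statement that $E$ has rank $s$, i.e.\ that $E$ is surjective.

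To verify surjectivity I will construct a Lagrange-type basis inside $\mathcal{Q}_D$. First I want to pick a direction $\mathbf{v}\in\mathbb{F}^k$ such that $\mathbf{v}\cdot\mathbf{z}_1,\dots,\mathbf{v}\cdot\mathbf{z}_s$ are pairwise distinct. For each pair $i\neq j$ the set of $\mathbf{v}$ with $\mathbf{v}\cdot(\mathbf{z}_i-\mathbf{z}_j)=0$ is a hyperplane of size $|\mathbb{F}|^{k-1}$, so the number of bad $\mathbf{v}$ is at most $\binom{s}{2}|\mathbb{F}|^{k-1}$. Since $s\leq |\mathbb{F}|^{1/2}$ gives $\binom{s}{2}<|\mathbb{F}|$, a separating $\mathbf{v}$ exists, and this is precisely where the hypothesis $s\leq |\mathbb{F}|^{1/2}$ is used. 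Writing $a_j=\mathbf{v}\cdot\mathbf{z}_j$, the polynomial
\[
p_j(\mathbf{x})=\prod_{i\neq j}\frac{\mathbf{v}\cdot\mathbf{x}-a_i}{a_j-a_i}
\]
has total degree $s-1\leq D$, so it lies in $\mathcal{Q}_D$, and it satisfies $p_j(\mathbf{z}_\ell)=\delta_{j\ell}$. Thus $E(p_j)$ is the standard basis vector $\mathbf{e}_j$, and the span of these images is all of $\mathbb{F}^s$, so $E$ is surjective.

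I do not anticipate any serious obstacle: the argument is essentially a linear-algebra reduction combined with a union bound. The only slightly delicate point is to carefully track the two uses of observations~(i)--(ii), first to convert independence of the $s$ real-valued variables into joint uniformity of their distribution on $\mathbb{F}^s$, and second to deduce that joint uniformity from the full-rank property of the evaluation matrix. Finally, independence across the index $i\in[d]$ is automatic from the sampling of the $q_i$, since each $q_i(\mathbf{z}_j)$ depends only on $q_i$ and the $q_i$ were drawn independently.
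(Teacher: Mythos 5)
Your proof is correct and follows essentially the same strategy as the paper's: both exploit the hypothesis $s\leq|\mathbb{F}|^{1/2}$ to find a linear functional (equivalently, a change of coordinates) separating the $\mathbf{z}_j$, and then use univariate polynomial interpolation to get full rank, though you phrase this via surjectivity of the evaluation map with an explicit Lagrange basis while the paper works with the Vandermonde matrix acting on the $x_1$-coefficients and invokes observation~(iii) to absorb the remaining coefficients. Your packaging, which first peels off the trivial independence across $i\in[d]$ and then proves surjectivity of $E\colon\mathcal{Q}_D\to\mathbb{F}^s$, is arguably cleaner but not a different method.
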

	
	\begin{proof}
		First, suppose that the first coordinates of the vectors $\mathbf{z}_{1},\dots,\mathbf{z}_{s}$ are pairwise distinct. For $\alpha\in \{0,1,\dots,s-1\}$ and $i\in[d]$, let $c_{i,\alpha}$ be the coefficient of $x_{1}^{\alpha}$ in $q_i$. Also, let $\mathbf{c}_i=(c_{i,0},\dots,c_{i,s-1})$ and $\mathbf{y}_i=(1,\mathbf{z}_i(1),\mathbf{z}_i(1)^{2},\dots,\mathbf{z}_i(1)^{s-1})$. Then $\mathbf{y}_1,\dots,\mathbf{y}_s$ are linearly independent, using that $\mathbf{z}_1(1),\dots,\mathbf{z}_{s}(1)$ are pairwise distinct, and so the Vandermonde determinant is nonzero. Let $M\in\mathbb{F}_p^{s\times s}$ be the matrix whose rows are $\mathbf{y}_1,\dots,\mathbf{y}_s$. Then $M$ has rank $s$, so $M\mathbf{c}_i$ is uniformly distributed in $\mathbb{F}_p^{s}$. As $\mathbf{c}_1,\dots,\mathbf{c}_d$ are independent, we get that the $d\cdot s$ numbers $((M\mathbf{c}_i)(j))_{i\in[d],j\in [s]}$ are independent. But $q_{i}(\mathbf{z}_j)=X_{i,j}+Y_{i,j}$, where $X_{i,j}=(M\mathbf{c}_i)(j)$, and $X_{i,j}$ and $Y_{i',j'}$ are independent (since these variables depend on disjoint sets of random coefficients), hence $(q_{i}(\mathbf{z}_j))_{i\in[d],j\in [s]}$ are independent as well (see (iii)).
		
		Now consider the general case. We show that there exists an invertible matrix $M\in\mathbb{F}^{k\times k}$ such that $M\mathbf{z}_1,\dots,M\mathbf{z}_s$ have pairwise distinct first coordinates. As $M$ is a change of basis, the polynomial $q_i'$ defined as $q_i'(\mathbf{x})=q_i(M^{-1}\mathbf{x})$ is also uniformly distributed in $\mathcal{Q}_{p,D}$, so then we are done by the previous argument. Choose $M$ randomly from the uniform distribution on all invertible matrices. Then for $1\leq i<j\leq s$, we have $\mathbb{P}(M\mathbf{z}_i(1)=M\mathbf{z}_j(1))=(|\mathbb{F}|^{k-1}-1)/(|\mathbb{F}|^{k}-1)<1/|\mathbb{F}|$ as $M(\mathbf{z}_i-\mathbf{z}_j)$ is uniformly distributed on $\mathbb{F}^{k}\setminus\{0\}$. Hence, by Markov's inequality, the probability that there exists $1\leq i<j\leq s$ such that $M\mathbf{z}_i(1)=M\mathbf{z}_j(1)$ is at most $\binom{s}{2}/|\mathbb{F}|<1$, implying the existence of the desired matrix $M$.
	\end{proof}
	
	Let $V$ be a $(d-k)$-dimensional affine subspace of $\mathbb{F}^{d}$ and let $\mathbf{z}\in\mathbb{F}^{k}$. Let $I(\mathbf{z},V)$ be the indicator random variable of the event $\{q(\mathbf{z})\in V\}$. Then there exist $k$ linearly independent vectors $\mathbf{v}_1,\dots,\mathbf{v}_k\in\mathbb{F}^{d}$ and $\mathbf{b}\in\mathbb{F}^{k}$ such that   $I(\mathbf{z},V)=1$ if and only if $\langle q(\mathbf{z}),\mathbf{v}_i\rangle=\mathbf{b}(i)$ for every $i\in [k]$.
	Furthermore, set $$N(V)=\sum_{\mathbf{z}\in \mathbb{F}^{k}} I(\mathbf{z},V).$$ Let $\mathbf{e}_1,\dots,\mathbf{e}_d\in\mathbb{F}^{d}$ be the unit basis, that is, $\mathbf{e}_i(j)=1$ if $i=j$, and $\mathbf{e}_i(j)=0$ otherwise. Let $E$ be the $(d-k)$-dimensional linear subspace with normal vectors $e_1,\dots,e_k$. By Claim \ref{claim:basis} and \ref{claim:uniform}, $N(V)$ has the same distribution as $N(E)$, so for simplicity, write $N=N(E)$ and $I(\mathbf{z})=I(\mathbf{z},E)$. Also, observe that $I(\mathbf{z})$ is the indicator random variable of the event $q_1(\mathbf{z})=\dots=q_k(\mathbf{z})=0$. Therefore, by Claim \ref{claim:uniform} and \ref{claim:independent}, we have that $\mathbb{P}(I(\mathbf{z})=1)=1/|\mathbb{F}|^{k}$, and if $s\leq \min\{D,|\mathbb{F}|^{1/2}\}$ and $\mathbf{z}_1,\dots,\mathbf{z}_s\in\mathbb{F}^{k}$ are distinct, then $I(\mathbf{z}_1),\dots,I(\mathbf{z}_s)$ are independent. 
	
	\begin{claim}\label{claim:expectation}
		Let $s\leq \min\{D,|\mathbb{F}|^{1/2}\}$. Then $\mathbb{E}(N^{s})\leq s^{s+1}$
	\end{claim}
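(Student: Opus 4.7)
The plan is to expand the $s$-th moment by linearity, writing
$$\mathbb{E}(N^s) = \sum_{(\mathbf{z}_1, \ldots, \mathbf{z}_s) \in (\mathbb{F}^k)^s} \mathbb{E}\!\left(\prod_{i=1}^{s} I(\mathbf{z}_i)\right),$$
and then partition the $s$-tuples on the right according to $t := |\{\mathbf{z}_1, \ldots, \mathbf{z}_s\}|$, the number of distinct coordinates. Since each $I(\mathbf{z})$ is a $0/1$-valued indicator, if the distinct values among $\mathbf{z}_1, \ldots, \mathbf{z}_s$ are $\mathbf{w}_1, \ldots, \mathbf{w}_t$, then $\prod_{i=1}^{s} I(\mathbf{z}_i) = \prod_{j=1}^{t} I(\mathbf{w}_j)$, so the expectation depends only on $t$ and on the distinct values.

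The key input is that, since $t \leq s \leq \min\{D, |\mathbb{F}|^{1/2}\}$, the discussion preceding the claim (relying on Claims~\ref{claim:uniform} and~\ref{claim:independent}) guarantees that $I(\mathbf{w}_1), \ldots, I(\mathbf{w}_t)$ are independent Bernoullis of success probability $|\mathbb{F}|^{-k}$. Hence the expectation of the product is exactly $|\mathbb{F}|^{-kt}$. I would then estimate the number of $s$-tuples having exactly $t$ distinct values: choosing the $t$-element image in $\mathbb{F}^k$ gives at most $\binom{|\mathbb{F}|^k}{t} \leq |\mathbb{F}|^{kt}/t!$ possibilities, and the associated map $[s] \to \{\mathbf{w}_1, \ldots, \mathbf{w}_t\}$ contributes at most $t^s$.

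Multiplying these estimates, the contribution to $\mathbb{E}(N^s)$ from tuples with exactly $t$ distinct entries is bounded by $(|\mathbb{F}|^{kt}/t!) \cdot t^s \cdot |\mathbb{F}|^{-kt} = t^s/t!$. Summing over $t = 1, \ldots, s$ and using the crude bound $t^s/t! \leq s^s$ for each $t \in [s]$ then yields
$$\mathbb{E}(N^s) \leq \sum_{t=1}^{s} \frac{t^s}{t!} \leq s \cdot s^s = s^{s+1},$$
which is the desired inequality. There is no real obstacle beyond the careful bookkeeping of distinct values; all the probabilistic content is already encapsulated in the independence statement established just before the claim, and the condition $s \leq \min\{D, |\mathbb{F}|^{1/2}\}$ is exactly what lets us invoke it for every $t \leq s$.
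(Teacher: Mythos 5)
Your proof is correct and follows essentially the same route as the paper: expand the $s$-th moment, partition $s$-tuples by the number $t$ of distinct entries, invoke the $s$-wise independence (valid since $t\leq s\leq\min\{D,|\mathbb{F}|^{1/2}\}$) to get $|\mathbb{F}|^{-kt}$ for each product, and bound the count of such tuples. The paper counts tuples with $t$ distinct entries by $t^s|\mathbb{F}|^{kt}$ while you use the slightly sharper $\binom{|\mathbb{F}|^k}{t}t^s$, but both yield the same final bound $s^{s+1}$.
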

	
	\begin{proof}
		We can write 
		$$\mathbb{E}(N^{s})=\sum_{\mathbf{z}_1,\dots,\mathbf{z}_s\in\mathbb{F}_p^{k}}\mathbb{E}(I(\mathbf{z}_1)\dots I(\mathbf{z}_s)).$$
		Here, the $s$-wise independence of the variables $I(\mathbf{z})$ guarantees that $\mathbb{E}(I(\mathbf{z}_1)\dots I(\mathbf{z}_s))=|\mathbb{F}|^{-kr}$, where $r$ is the number of different elements among $\mathbf{z}_1,\dots,\mathbf{z}_s$. The number of choices of $(\mathbf{z}_1,\dots,\mathbf{z}_s)$ containing $r$ distinct entries is at most $r^{s}p^{kr}$ (as there are at most $|\mathbb{F}|^{kr}$ choices for the $r$ vectors, and each $r$-tuple of vectors yields at most $r^{s}$  such $s$-tuples). Hence, we arrive to the bound
		$$\mathbb{E}(N^{s})\leq \sum_{r=1}^{s}r^{s}|\mathbb{F}|^{kr}\cdot |\mathbb{F}|^{-kr}<s^{s+1}.$$
	\end{proof}
	
	A crucial ingredient in the proof is the following fact from algebraic geometry, which says that a variety in $\mathbb{F}_p^{k}$ contains either at most a constant number of points (depending only on the degree of the variety), or at least $\Omega(p)$ points. In our case, this means that $N$ is either bounded by a constant, or at least $\Omega(p)$. But as the higher moments of $N$ are bounded by a constant, it is very unlikely that $N=\Omega(p)$.
	
	\begin{lemma}\label{lemma:variety}\cite{Bukh}
		For every $k$ and $D$ there exists a constant $c$ such that the following holds. Suppose that $q_1,\dots,q_k\in \mathbb{F}[x_1,\dots,x_k]$ are polynomials of degree at most $D$. Then the size of the variety 
		$$W=\{\mathbf{x}\in\mathbb{F}^{k}: q_1(\mathbf{x})=\dots=q_k(\mathbf{x})=0\}$$
		is either at most $c$, or at least $|\mathbb{F}|-c|\mathbb{F}|^{1/2}$.
	\end{lemma}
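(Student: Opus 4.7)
The plan is to set up a dimension-based dichotomy for the Zariski closure $\overline{W}$ of $W$ in $\overline{\mathbb{F}}^{k}$, and then invoke Bezout's inequality for the low-dimensional branch and the Lang--Weil estimate for the positive-dimensional branch. First, by the affine Bezout inequality (Heintz's form) applied to $q_1, \dots, q_k$ viewed in $\overline{\mathbb{F}}[x_1, \dots, x_k]$, the sum of degrees of the irreducible components of $\overline{W}$ is bounded by $D^{k}$; in particular every irreducible component has degree at most $D^{k}$.

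If $\dim \overline{W} = 0$, then $\overline{W}$ is a finite set of size at most $D^{k}$, so $|W| \leq D^{k}$, placing us in the first alternative of the lemma. Otherwise $\overline{W}$ contains an irreducible component of dimension $r \geq 1$. The action of $\mathrm{Gal}(\overline{\mathbb{F}}/\mathbb{F})$ permutes these components, so the $\mathbb{F}$-irreducible components of $W$ correspond to Galois orbits. If some positive-dimensional component $Z$ has trivial Galois orbit, i.e.\ is absolutely irreducible and defined over $\mathbb{F}$, then the Lang--Weil estimate yields
$$|Z(\mathbb{F})| \geq |\mathbb{F}|^{r} - C|\mathbb{F}|^{r-1/2} \geq |\mathbb{F}| - C|\mathbb{F}|^{1/2}$$
for some $C = C(k, D)$ depending only on $k$ and $D$ (via the degree bound $D^{k}$), which lands in the second alternative.

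The remaining case requires care: suppose every positive-dimensional $\mathbb{F}$-irreducible component $C$ splits over $\overline{\mathbb{F}}$ as a nontrivial Galois orbit $Z_1 \cup \dots \cup Z_m$ with $m \geq 2$. Then any $\mathbb{F}$-rational point of $C$ is Galois-fixed, so it lies in the strictly lower-dimensional intersection $Z_1 \cap \dots \cap Z_m$, which is itself defined over $\mathbb{F}$. Iterating this descent confines $W(\mathbb{F})$ to a zero-dimensional $\mathbb{F}$-variety, whose size is again bounded by a constant depending only on $k$ and $D$, placing us back in the first alternative. Taking $c$ to be the maximum of the three constants produced concludes the dichotomy.

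The main obstacle is uniform control of the degrees throughout this iterative descent, since each step involves intersecting several conjugate components and the resulting variety must still have degree bounded by a function of $k$ and $D$ alone. This follows from repeated applications of Bezout, but the bookkeeping must be carried out carefully so that the final constant $c$ is genuinely independent of $|\mathbb{F}|$ and the specific polynomials $q_1,\dots,q_k$.
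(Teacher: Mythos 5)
Your proof is correct and follows the same Bezout/Lang--Weil/Galois-descent approach as Bukh's original argument, which the paper cites as \cite{Bukh} without reproducing a proof. One small imprecision worth fixing in the write-up: in the iterative descent, a positive-dimensional absolutely irreducible component defined over $\mathbb{F}$ may appear at a later round (landing you in the second alternative via Lang--Weil), so the iteration does not always run down to a zero-dimensional variety as your phrasing suggests; the process still terminates after at most $k$ rounds since the dimension strictly decreases, and the degrees at each round remain bounded by iterated Bezout, so all constants depend only on $k$ and $D$ as required.
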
	
	
	Now everything is prepared to prove our main theorem.
	
	\begin{proof}[Proof of Theorem \ref{thm:main2}] Clearly, it is enough to prove the theorem in case $|\mathbb{F}|$ is sufficiently large with respect to $d$ and $k$. Let $c$ be the constant given by Lemma \ref{lemma:variety} (with respect to $k$ and $D$), and  suppose that $|\mathbb{F}|>\max\{(2c)^2,(2D)^{D+1}\}$. 
		
		We show that $S=\{\mathbf{q}(\mathbf{z}):\mathbf{z}\in\mathbb{F}^{k}\}$ satisfies the assertion of the theorem with positive probability. Let $V$ be a $(d-k)$-dimensional affine subspace of $\mathbb{F}^{d}$. Then $|V\cap S|=N(V)$, so by Claim \ref{claim:expectation} applied with $s=D$, we have $\mathbb{E}(|V\cap S|^{D})\leq D^{D+1}$. Applying Markov's inequality, for every $\lambda>0$, we have
		$$\mathbb{P}(|V\cap S|\geq \lambda)\leq \mathbb{P}(|V\cap S|^D\geq \lambda^{D})\leq \frac{D^{D+1}}{\lambda^{D}}.$$
		But note that by Lemma \ref{lemma:variety}, we have either $|V\cap S|\leq c$, or $|V\cap S|\geq |\mathbb{F}|-c|\mathbb{F}|^{1/2}>|\mathbb{F}|/2$. Hence, we can further write
		$$\mathbb{P}(|V\cap S|>c)=\mathbb{P}\left(|V\cap S|\geq \frac{|\mathbb{F}|}{2}\right)\leq \frac{(2D)^{D+1}}{|\mathbb{F}|^D}.$$
		The number of different $(d-k)$-dimensional affine subspaces in $\mathbb{F}^{d}$ is at most $ (|\mathbb{F}|^{d})^{k}\cdot |\mathbb{F}|^{k}=|\mathbb{F}|^{(d+1)k}$, as there are at most $(|\mathbb{F}|^{d})^{k}$ choices for the $k$ normal vectors, and at most $|\mathbb{F}|^{k}$ translations. Therefore, the expected number of hyperplanes $V$ violating $|V\cap S|\leq c$ is at most $$\frac{|\mathbb{F}|^{(d+1)k}\cdot (2D)^{D+1}}{|\mathbb{F}|^D}\leq \frac{(2D)^{D+1}}{|\mathbb{F}|}<1,$$ recalling that $D=(d+1)k+1$. This finishes the proof.
	\end{proof}
	
	\section{Covering by hyperplanes}\label{sect:covering}
	In this section, we prove Theorem \ref{thm:linear}, and provide and alternative proof of Corollary \ref{cor:covering}.

	Given a prime $p$ and vectors $\mathbf{x},\mathbf{y}\in\mathbb{F}_p^{d}$, write $\mathbf{x}\sim \mathbf{y}$ if there exists $\lambda\in\mathbb{F}_p\setminus\{0\}$ such that $\mathbf{x}=\lambda \mathbf{y}$. A crucial observation is that the $\sim$ equivalence class of each vector contains an element, whose every coordinate is contained in the interval $[- p^{(d-1)/d}, p^{(d-1)/d}]$. This follows from Dirichlet's theorem on simultaneous approximations (see e.g. \cite{approx}, Chapter 2, Theorem 1A), but we also provide a simple proof for completeness.
	
	\begin{lemma}\label{lemma:approx}
		Let $d,n$ be positive integers, let $p\leq n^{d/(d-1)}$ be a prime, and let $\mathbf{x}\in\mathbb{F}_p^{d}\setminus\{0\}$. Then there exists $\mathbf{y}\in\mathbb{F}_p^d$ such that $\mathbf{x}\sim\mathbf{y}$ and $\mathbf{y}(i)\in [-n,n]$ for $i\in [d]$.
	\end{lemma}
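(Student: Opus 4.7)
The plan is a pigeonhole argument in the spirit of Minkowski's theorem on lattices. The hypothesis $p \leq n^{d/(d-1)}$ rewrites as $p^{d-1} \leq n^d$, which is precisely the volume threshold that forces the rank-$d$ lattice $\mathbb{Z}\mathbf{x} + p\mathbb{Z}^d \subset \mathbb{Z}^d$ (of index $p^{d-1}$) to meet the box $[-n,n]^d$ in a nonzero vector. I will realize this as a direct counting argument in $\mathbb{F}_p^d$ rather than invoking Minkowski.

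First I would dispose of the trivial regime $n \geq p$: here every residue class modulo $p$ admits a representative in $[-n,n]$, so one can simply take $\mathbf{y} := \mathbf{x}$. Assume henceforth that $n < p$. The heart of the proof is to consider the map
$$\Phi \colon \{0,1,\dots,n\}^d \times \mathbb{F}_p \longrightarrow \mathbb{F}_p^d, \qquad (\mathbf{b},\lambda) \mapsto \mathbf{b} + \lambda \mathbf{x} \pmod p.$$
Its domain has size $(n+1)^d \cdot p$, and the hypothesis gives
$$(n+1)^d \cdot p > n^d \cdot p \geq p^{d-1} \cdot p = |\mathbb{F}_p^d|,$$
so by pigeonhole there exist two distinct pairs $(\mathbf{b}_1,\lambda_1) \neq (\mathbf{b}_2,\lambda_2)$ with the same image. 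Rearranging yields $(\lambda_1 - \lambda_2)\mathbf{x} \equiv \mathbf{b}_2 - \mathbf{b}_1 \pmod p$.

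It remains to check that the collision is nondegenerate, namely $\lambda_1 \neq \lambda_2$. If instead $\lambda_1 = \lambda_2$, then $\mathbf{b}_1 \equiv \mathbf{b}_2 \pmod p$; but the entries of $\mathbf{b}_1, \mathbf{b}_2$ lie in $\{0,\dots,n\}$ with $n < p$, so each coordinate of $\mathbf{b}_1 - \mathbf{b}_2$ is a multiple of $p$ lying in $\{-n,\dots,n\}$, forcing $\mathbf{b}_1 = \mathbf{b}_2$ and contradicting the distinctness of the pairs. Hence $\mu := \lambda_1 - \lambda_2 \in \mathbb{F}_p \setminus \{0\}$, and the integer vector $\mathbf{y} := \mathbf{b}_2 - \mathbf{b}_1 \in \{-n,\dots,n\}^d$ reduces modulo $p$ to the nonzero scalar multiple $\mu\mathbf{x}$ of $\mathbf{x}$, producing the desired $\mathbf{y} \in \mathbb{F}_p^d$ with $\mathbf{y} \sim \mathbf{x}$ and each coordinate's representative in $[-n,n]$. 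The only point of mild delicacy is separating the trivial case $n \geq p$ at the outset so that reduction modulo $p$ is injective on $\{-n,\dots,n\}^d$ and can be used to force $\lambda_1 \neq \lambda_2$; once this is done, the pigeonhole inequality is immediate from the hypothesis.
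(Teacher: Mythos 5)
Your proof is correct and uses essentially the same pigeonhole argument as the paper: the paper tiles $\mathbb{F}_p^d$ by the $p$ translates $B_t(\lambda\mathbf{x})$ of a box and observes two must overlap, which is the dual phrasing of your collision argument for $(\mathbf{b},\lambda)\mapsto\mathbf{b}+\lambda\mathbf{x}$. Your version is a touch cleaner in that you work directly with the box $\{0,\dots,n\}^d$ and get the sharp range $[-n,n]$ without introducing the auxiliary radius $t=p^{(d-1)/d}/2$, and you explicitly isolate the trivial case $n\geq p$ and the check that $\lambda_1\neq\lambda_2$, both of which the paper glosses over.
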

	
	\begin{proof}
		For every $\mathbf{z}\in\mathbb{F}_p^{d}$ and positive integer $t$, define the ``ball of radius $t$ centered at $\mathbf{z}$'' as
		$$B_t(\mathbf{z})=\{\mathbf{v}\in\mathbb{F}_p^{d}: \forall i\in [d], \mathbf{v}(i)-\mathbf{z}(i)\in [-t,t]\}.$$
		Then for $t\leq (p-1)/2$, we have $|B_{t}(\mathbf{z})|=(2t+1)^{d}$. Let $t=p^{(d-1)/d}/2$, then $t$ satisfies $p\cdot(2t+1)^{d}>p^{d}$. Hence, by the pigeonhole principle, there exists $\lambda_1\neq \lambda_2\in\mathbb{F}_p$ such that $B_t(\lambda_1\mathbf{x})\cap B_{t}(\lambda_2\mathbf{x})\neq \emptyset$. But then setting $\mathbf{y}=(\lambda_1-\lambda_2)\mathbf{x}\neq 0$, we have $\mathbf{x}\sim\mathbf{y}$ and $\mathbf{y}(i)\in [-2t,2t]\subset [-n,n]$.
	\end{proof}
	
	\begin{proof}[Proof of Theorem \ref{thm:linear}]
		Let $p$ be a prime such that $n^{d/(d-1)}/2<p<n^{d/(d-1)}$, which exists by Bertrand's postulate. Let $c=c(d,k)$ be the constant provided by Theorem \ref{thm:main}, and let $S_0\subset \mathbb{F}_p^{d}$ be a set of $p^{d-k}$ vectors such that no $k$-dimensional (linear) subspace contains more than $c$ elements of $S_0$. By Lemma \ref{lemma:approx}, for every $\mathbf{x}\in S_0$ there exists $\mathbf{x}^{*}\in [-n,n]^{d}$ such that $\mathbf{x}^{*}\equiv\lambda \mathbf{x} \pmod{p}$ for some $\lambda\neq 0$. In particular, no $k$-dimensional linear hyperplane contains more than $c$ elements of $S^{*}=\{\mathbf{x}^{*}:\mathbf{x}\in S_0\}$. By the pigeonhole principle, there exists $S'\subset S^{*}$ of size at least $p^{d-k}/3^d$ such that every element of $S'$ have the same sign-pattern. Let $S$ be the set of vectors we get after changing the 0 entries of the elements of $S'$ to $1$, and multiplying the negative coordinates by -1. Then $S$ is contained in $[n]^{d}$, it has at least $p^{d-k}/3^{d}\geq n^{d(d-k)/(d-1)}/6^{d}$ elements, and it is easy to check that no $k$-dimensional linear hyperplane contains more than $c$ elements of $S$.
	\end{proof}
	
	\begin{proof}[Proof of Corollary \ref{cor:covering}]
		For slight convenience, we consider the grid $[-n,n]^{d}$ instead of $[n]^{d}$. This does not change the problem up to the value of $C$ for the following reason. If $[n]^{d}$ can be covered by $N$ linear hyperplanes of dimension $k$, then $[-n,n]^{d}$ can be covered by $3^{d}N$ linear hyperplanes of dimension $k$, as we can partition $[-n,n]^{d}$ into $3^{d}$ parts with respect to the signs of the vectors, and each part requires at most $N$ hyperplanes. 
		
		Let $p$ be a prime such that $n^{d/(d-1)}/2<p<n^{d/(d-1)}$, which exists by Bertrand's postulate. For every $\mathbf{x}\in\mathbb{F}_p^{d}$, let $\mathbf{x}^{*}\in [-n,n]^{d}$ be an arbitrary vector such that $\mathbf{x}\sim \mathbf{x}^{*}$, and let $$S=\{\mathbf{x}^{*}:\mathbf{x}\in\mathbb{F}_p^{d}\setminus\{0\}\}\subset [-n,n]^{d}.$$
		Then $|S|=(p^{d}-1)/(p-1)\geq p^{d-1}$.
		
		Suppose that $S'\subset S$ spans a linear hyperplane of dimension at most $k$ over $\mathbb{R}$. Then $S'$ spans a subspace of $\mathbb{F}_p^{d}$ of dimension at most $k$. As $S'$ contains at most one element of each equivalence class of $\sim$, we get that $|S'|\leq (p^{k}-1)/(p-1)\leq 2p^{k-1}.$ Hence, any covering of $S$ with linear hyperplanes contains at least $|S|/2p^{k-1}\geq p^{d-k}/2\geq n^{d(d-k)/(d-1)}/2^{k+1}$ elements.
	\end{proof}
	
	\section{Incidences}\label{sect:incidence}
	
	As the proof of Theorem \ref{thm:incidence} is essentially identical to the proofs of \cite{BK03} and \cite{BCV}, let us only give a very brief outline of it.
	
	\begin{proof}[Proof sketch of Theorem \ref{thm:incidence}]
	Let $k=\lfloor d/2\rfloor-1$,  $n_0\approx n^{1/(d-k)}$ and $m_0\approx (m/n_0)^{(d-1)/(dk+2d-1)}$. Let $P\subset [n_0]^{d}$ be a maximal set of lattice points such that no  $k$-dimensional affine subspace contains more than  $c_1=c(d,k)$ points of $P$, then $|P|\approx n_0^{d-k}\approx n$ by Theorem \ref{thm:affine}. Also, let $N\subset [m_0]^{d}$ be a maximal set of lattice points such that no $(d-k-1)$-dimensional linear subspace contains more than  $c_2=c(d,d-k-1)$ points of $N$, then $|N|\approx m_0^{d(k+1)/(d-1)}$ by Theorem \ref{thm:linear}. Let $\mathcal{H}$ be the set of all hyperplanes whose normal vector is in $N$ and  contains at least one point of $P$. Then $|\mathcal{H}|\lessapprox m_0n_0 |N|\approx m$ as the scalar product $\langle \mathbf{x},\mathbf{y}\rangle$ for any $\mathbf{x}\in P$ and $\mathbf{y}\in N$ is contained in $[dm_0n_0]$. Furthermore, the incidence graph of $(P,\mathcal{H})$ is $K_{c_1,c_2}$-free, as the intersection of any $c_2+1$ elements of $\mathcal{H}$ is an at most a $k$-dimensional affine hyperplane. Finally, $I(P,\mathcal{H})= |P||N|$, as for each $\mathbf{y}\in N$, the hyperplanes in $\mathcal{H}$ with normal vector $\mathbf{y}$ form a partition of $P$. Plugging in our bounds on $|P|$ and $|N|$ gives the desired result. See \cite{BCV} for the precise calculations, that give almost the same bounds.
	\end{proof}

	\vspace{0.3cm}
	\noindent	
	{\bf Acknowledgements.}
	We would like to thank Noga Alon and David Conlon for their valuable remarks and for pointing out the relevant references. Furthermore, we learned that D. Conlon (private communication) also obtained a proof of Theorem \ref{thm:main} using the random algebraic method.
	
	\vspace{0.15cm}
	\noindent
	Both authors were supported by the SNSF grant 200021\_196965.

\end{document}